\newtheorem{them}{Theorem}[section]
\newtheorem{prop}[them]{Proposition}
\newtheorem{lem}[them]{Lemma}
\newtheorem{ex}[them]{Example}
\theoremstyle{definition}
\newtheorem{defn}[them]{Definition}
\theoremstyle{definition}
\numberwithin{them}{section}
\numberwithin{equation}{section}
\begin{document}
\title{Quasi-semilattices on networks}
\thanks{Corresponding author: Dazhi Meng}
\thanks{The  first author was supported  by NSFC (Grant No:11501331)}
\author{Yanhui Wang}

\address{College of Mathematics and Systems Science, Shangdong University of Science and Technology, Qingdao, 266590, P. R. China}
\email{yanhuiwang@sdust.edu.cn}
\author{Dazhi Meng}
\address{ College of Applied Science, Beijing University of Technology, 100 Pingleyuan, Beijing, 10024, P. R. China}
\email{dzhmeng07@163.com}

\keywords{Network quasi-semilattices, Tensors, Reducing}
\maketitle
\renewcommand{\thefootnote}{\empty}

\begin{abstract}
This paper introduces the tensor representation of a network, here tensors are the primitive structures of the network. In view of tensor chains, two binary operations on tensor sets are defined: chain addition and reducing. Based on the reducing operation, the tensor chain representation of subnetworks of a network is given, and it is proved that all connected subnetworks of a network (here refers to the tensor chain generated by primitive structures) form a quasi-semilattice with respect to reducing, namely {\it network quasi-semilattices}. Here, quasi-semilattices refer to algebraic systems that are idempotent commutative and do not satisfy the association law. Then, we discuss the subalgebra structures of the network quasi-semilattice in terms of two equivalent relations $\sigma$ and $\delta$. $\delta$ is a congruence. Each $\delta$-class forms a semilattice with respect to reducing, that is, an idempotent commutative semigroup, and also each $\delta$-class has an order structure with the maximum element and minimum elements. Here, the minimum elements correspond to the spanning tree in graph theory. Finally, we discuss how three path algebras: graph inverse semigroups, Leavitt path algebra and Cuntz-Krieger graph $C^*$-algebra are constructed in terms of tensors with respect to chain-addition.
\end{abstract}

\section*{Introduction}

The research of network algebra in theory and application has attracted extensive attentions\cite{G1}. For example, the usual adjacency matrix  is a typical algebraic representation of networks. According to the attribute classification of the association among vertices in a network, the most common networks are correlation networks and simple directed networks. A correlation network refers to the undirected network in which vertices are associated through correlations.  An undirected network in which the connection  is one vertex to another one, is called a simple correlation network. The connection in a simple directed network is represented by a directed arrow pointing from one vertex to another. Such a network is also known as a simple logic network (or a first-order logic network), and so on. In 2004, Bowers et al. \cite{Peter2} introduced the concept and algorithm of high-order logic relationships and pointed out its valuable application in protein researches. This kind of network with high-order logic is much more complex than simple directed networks. It reflects the complex directed interaction between multiple vertices. However, this high-order logic network is difficult to be represented by adjacency matrix, which puts forward a new topic on the algebraic representation of networks. The tensor representation of networks firstly introduced in this paper is a generalization of adjacency matrices, which is suitable for low-order and high-order correlation networks and logic networks. In this paper, correlation networks and high-order logic networks extended by the research in \cite{Peter2} are collectively referred to as {\it general networks}, which are the main research object of this paper.

The network  model of a system is usually represented by the geometric representation of graph theory, that is, the network vertices and the connecting lines between vertices. This is a view that vertices are regarded as research subjects (primitives), and the connection between vertices is regarded as the correlation attribute of vertices. Such a representation has achieved rich results in network research \cite{Newman3,Boccaletti4,Newman5}. Another perspective to observe and study networks is to take the complex association structure between vertices as the research objects, such as the high-order logic relationships. In this paper, the basic association between vertices is represented by tensor, and these tensors are regarded as the basic structural elements of a network, also known as {\it primitives}, and the vertices are regarded as the association medium between primitives. The view of this paper is inspired by the thought of structuralism \cite{Jean6}. The research of networks  based on structuralism will focus on the algebra of network structural elements, that is, to study the algebra consisting of  primitives, especially to explore the subalgebra structure of this kind of network algebra. This is an effective method to study the network focusing on the coupling functions and attribute of a system.

In view of the complexity of association structures, the perspective of structuralism needs a new representation method of  networks, that is, the tensor representation   given in Section~\ref{sec:TensorRep}, which is a typical algebraic representation of general networks. A special case of this representation is the adjacency matrix in correlation networks and simple directed networks. The adjacency matrix of a network plays an indispensable role in the research of networks. The stoichiometric matrix is an adjacency matrix, which represents  chemical reaction networks such as the metabolic network in biology, and plays an important role in the research of biochemistry \cite{Bernhard7}. Notice that the adjacency matrix only can represent the linear network which means that the connection is one to one . While  high-order logic networks are essentially nonlinear \cite{Peter2}. Tensor representation can well describe more general complex networks including  high-order logic networks, and this algebraic representation introduces a completely algebraic algorithm to network calculation. Although the pure algebraic algorithm loses the intuition of graph theory, this more abstract  model of networks will bring new significance in theory and application. The tensor representation of networks will obviously lead to the study of related algebra. As we all know, network algebra caused by algebraic methods has become an encouraging research direction in a certain range\cite{G1}. It shows the important value of algebraic methods in network researches.

The structure of this paper is as follows. Section~\ref{sec:TensorRep} presents the tensor representation of primitives of a network. In Section~\ref{sec:quasi-semilattice} two binary operations, called {\it chain addition of tensors} and {\it reducing of 2-chains}, are introduced. The set of all 2-chains of a network $\Gamma$ with respect to  reducing generates a quasi-semilattice ${\mathcal{L}}(\Gamma)$ in which every element is idempotent and the binary operation is commutative but non-associative. In Section~\ref{sec:subalgebra}, the subalgebra of ${\mathcal{L}}(\Gamma)$ is introduced based on two equivalent relations $\sigma$ and $\delta$. We show that $\delta$ is a congruence and  each $\delta$-class is a semilattice which is an idempotent commutative semigroup. Further the relationship between two $\delta$-classes is discussed and give a condition when two $\delta$-classes are isomorphic. Section~\ref{sec:orderrelation}, a partial order relation is given and the local maximum and the minimum elements of $\delta$-classes are investigated. Section~\ref{sec:path} claims that the graph inverse semigroups\cite{Ash8}, Leavitt path algebra\cite{Kumjian9} and Cuntz-Krieger graph $C^*$-algebra\cite{Abrams10, Ara11} can be generated by tensor representation of a one-order logic network. The three path algebras consist of only paths of one-order logic network which  are the special subgraphs of a network, while  the elements in ${\mathcal{L}}(\Gamma)$ are not only the paths but also other subnetworks such as branchings.

This paper introduces the tensor representation of a network, which is different from the traditional network research method taking vertices as the research object. The research on the algebraic system generated by tensor is applicable to related networks, logic networks and hybrid networks. Tensors generates not only the path of the network, but also all connected subnetworks of the network (such as branches, especially in high-order logic networks. The traditional path cannot represent all of the high-order subnetworks. Therefore, the tensor representation of networks not only introduces new algebraic theory, but also provides a new method for the study of networks.

\section{Tensor representation of a general network}\label{sec:TensorRep}

\subsection{General networks}
The mathematical model of a network consists of vertices and their associated relationships. Therefore, the network can be divided into three types according to the associated relationships: correlation networks, logic networks and hybrid networks. A network in which all the associated relationships are logically related is called a {\it logic network}, also called a {\it directed network} because its relationships are represented by directed arrows in graph theory. Similarly, the network consists of correlations is called a {\it correlation network},  and also called an {\it undirected network} in graph theory. Then the concept of hybrid networks is obvious. In this paper, correlation networks, logic networks and hybrid networks are collectively referred to as "{\it general network}``.

In 2004, Bowers et al. introduced the concept of high-order logic  relationships by using Boolean logic relations and information entropy \cite{Peter2}, which extends the traditional one to one directed correlation. Logic networks are divided into first-order and high-order. The logic relationship between two vertices $i$ and $j$ is represented by directed arrow $``i \bullet\longrightarrow\bullet j"$. Such a network is called a {\it first-order logic relationship}. The  {\it high-order logic relationship} is represented by the directed arrow starting at $n$ vertices and pointing to $m$ vertices after $n$ vertices converge. In particular, a high-order logic relationship which starting at $n$ vertices and pointing to one vertex after $n$ vertices converse is called a {\it $n$-th logic relationship}.  For example, the second-order logic refers to the logic that two vertices jointly affect one vertex, that is,
\begin{center}
\begin{tikzpicture}
\path (0,0)    node (A){$i\bullet$}
      (0,-1)  node (B) {$j\bullet$}
      (1,-0.57) node(D){}
      (1,-0.43) node(F){}
      (0.71,-0.5) node (E){}
      (2,-0.5) node (C) {$\bullet k$};
\draw[-,black] (A)--(D);
\draw[-,black] (B)--(F);
\draw[->,black] (E)--(C);
\end{tikzpicture}.
\end{center}

\subsection{Tensor representation of networks}\label{subsec:TensorRep}
As we all know, both first-order correlation networks and first-order logic networks can be represented by adjacency matrix (as well as first-order hybrid networks), which is equivalent to a graph representation in graph theory. Matrix representation has a wide range of important applications \cite{Bernhard7}, which highlights the significance of algebraic representation. However, it is hard to generalise the adjacency  matrices to high-order logic networks as it is difficult to represent high-dimensional geometric space images by matrix. In order to realize the unified algebraic representation of general networks, tensor representation is introduced in this paper as follows.

 The second-order covariant tensor $T_{ij}$ is used to represent the correlation between vertices $i$ and $j$, painted as $``i \bullet\rule[3.5pt]{0.5cm}{0.05em}\bullet j"$; the first-order covariant contra-variant tensor $T_{i}^{j}$ is used to represent the first-order logical relationship from vertex $i$ to vertex $j$, painted as $``i \bullet\longrightarrow\bullet j"$; the tensor with $n$ covariant indexes and $m$ contra-variant indexes $T_{i_1i_2\ldots i_n}^{j_1j_2\ldots j_m}$ is used to represent the high-order logic relationship from $n$ vertices to $m$ vertices.
Thus, we complete the tensor representation of the connection structures in general networks.

In tensor representation, the superscript is called {\it contra-variant index}, and the subscript is called {\it covariant index}, where the covariant index is symmetric and disordered. The union of covariant index and contra-variant index is called {\it tensor index set}. In order to present the contra-variant index, the covariant index and the index of a tensor conveniently, three maps from the set $T$ of tensors of a network to the power set ${\mathcal{P}}(\Gamma^0)$ of the set $\Gamma^0$ of vertices of a network $\Gamma$ are defined as follows. $\emptyset$ denotes the empty set. For arbitrary $A, B \subset \Gamma^0$ and $A \neq \emptyset$,
\[\psi(T_A^B)=A,\hspace{2mm} \varphi(T_A^B)=B\ \mbox{and}\ I(T_A^B)=A \cup B.\]

\section{Quasi-semilattices}\label{sec:quasi-semilattice}

The tensor representation presents the most basic association between vertices in a network, and the set consisting of tensors is the primitive structure of a network. This section studies algebraic systems generated by network primitive structures. For ease of description, let $\Gamma$ denote a general network, $\Gamma^0$ denote a set of vertices of $\Gamma$, and $T$ denote a set of tensors of $\Gamma$ containing the empty relationship $\emptyset$.  In this paper, we only consider finite networks, that is, $|\Gamma^0|<\infty$ and $|T|<\infty$. Suppose that $|T|=n$, $n \in {\mathbb{N}}$. Sorting elements in $T$, we get $T=\{t_1, t_2, \ldots, t_n\}$. In a network, two different tensors are connected through common vertices.  Therefore, a binary operation of  two tensors is defined here, which is called the {\it chain addition of tensors}, written as $\oplus$. The correlation between two tensors is generated by chain addition.

\begin{defn}\label{ChainAdd}
For any $t, t_1, t_2 \in T$,
$$t_1 \oplus t_2 =
\begin{cases}
\ \oplus(t_1,t_2) & \mbox{if } \ t_1\neq t_2 \ \mbox{and}\ I(t_1) \cap I(t_2)\neq \emptyset \\
\ t_1(\mbox{or}\ t_2) & \mbox{if}\ t_1=t_2\\
\ \emptyset & \mbox{otherwise,}
\end{cases}
$$
$$t \oplus \emptyset =\emptyset \oplus t=\emptyset, $$
where $(t_1,t_2)$ denotes the set $\{t_1,t_2\}$ in $\oplus (t_1,t_2)$, and $\oplus(t_1,t_2)$ denotes the subnetwork obtained by combining  all the same indexes of $t_1$ and $t_2$  .
\end{defn}

Here we should stress that in Definition~\ref{ChainAdd} $\oplus(t_1, t_2)=\oplus(t_2, t_1)$ as $(t_1, t_2)$ denotes the set whose elements are $t_1$ and $t_2$. We call $\oplus(t_1, t_2)$ a {\it 2-chain}.

In a simple correlation network, there exists only one type of 2-chain, that is, when two different tensors $T_{i_1j_1}$ and  $T_{i_2j_2}$ have a common covariant index, a 2-chain is formed, that is,
$``j_1 \bullet\rule[3.5pt]{0.5cm}{0.05em}\bullet i_1 \rule[3.5pt]{0.5cm}{0.05em}j_2"$ if $i_1=i_2$. In a high-order logic network only consisting of  first-order and second-order logic relationships, the formation of 2-chains is divided into three categories: $\rm (C1)$ two first-order logic relationships, $\rm (C2)$ one first-order and one second-order logic relationships , and $\rm(C3)$ two second-order logic relationships. For Category $\rm (C1)$, according to the combination of the same index among the contra-variant index and covariant index of two different first-order logic relationships $T_{i_1}^{k_1}$ and $T_{i_2}^{k_2}$, there are three types of 2-chains:

$\rm (i)$\hspace{2mm} $i_1 \bullet\longrightarrow\bullet k_1\longrightarrow\bullet k_2$ if $k_1=i_2$;

$\rm (ii)$\hspace{2mm}  $k_1 \bullet\longleftarrow\bullet i_1\longrightarrow\bullet k_2$ if $i_1=i_2$;

$\rm (iii)$\hspace{2mm} $i_1 \bullet\longrightarrow\bullet k_1\longleftarrow\bullet i_2$ if $k_1=k_2$.\\
For Category $\rm (C2)$ and $\rm (C3)$ there are 5 and 7 types, respectively, according to the number and combination of the same indexes of tensors. Fig. 1 is an example which shows three types for Category $\rm (C3)$. In hybrid networks, besides the 2-chain types generated by two correlation or logic relationships, there is also a 2-chain type formed by one correlation and one logic relationships.

\begin{center}
\begin{tikzpicture}
\path (-1,-0.57) node(G){}
      (-1,-0.43) node(I){}
      (-0.71,-0.5) node (H){}
      (-2,-0.5) node (J) {$k_1 \bullet $}
      (0,0.3)    node (A0){$i_1$}
      (0,0)    node (A){$\bullet$}
       (0.15,0.1)    node (A2){}
       (-0.15,0.1)    node (A3){}
      (0,-1)  node (B) {$\bullet$}
      (0,-1.3)  node (B0) {$j_1$}
      (0.15,-1.1)  node (B2) {}
      (-0.15,-1.1)  node (B3) {}
      (1,-0.57) node(D){}
      (1,-0.43) node(F){}
      (0.71,-0.5) node (E){}
      (2,-0.5) node (C) {$\bullet k_2$}
      (4,0)    node (A4){$i_2\bullet$}
      (4,0.14)    node (A5){}
      (4,-0.1)    node (A6){}
      (4,-1)  node (B4) {$j_2\bullet$}
      (4,-1.14)  node (B5) {}
      (4.3,1.35)  node (L4) {$j_3$}
      (4.7,1.35)  node (L5) {$\bullet$}
      (4.7,1.45)  node (L6) {}
      (4.7,0.4)  node (E2) {}
       (4.85,0.7)  node (E3) {}
       (4.57,0.68)  node (E4) {}
      (5,-0.58) node(D1){}
      (5,-0.42) node(F1){}
      (4.7,-0.5) node (E1){}
      (6,-0.5) node (C1) {$\bullet k_3$}
      (8,0)    node (X4){$i_4\bullet$}
      (8,0.14)    node (X5){}
      (8,0)    node (X6){}
      (8,-1)  node (Y4) {$j_4\bullet$}
      (8,-1.14)  node (Y5) {}
      (8.3,1.35)  node (W4) {$i_3$}
      (8.7,1.35)  node (W5) {$\bullet$}
      (8.7,1.45)  node (W6) {}
      (8.7,0.4)  node (P2) {}
       (8.85,0.7)  node (P3) {}
       (8.57,0.68)  node (P4) {}
      (9,-0.58) node(D2){}
      (9,-0.42) node(F2){}
      (8.7,-0.5) node (P1){}
      (10,-0.5) node (C2) {$\bullet k_4$}
      (9.8,-0.55) node (C3){};
 \draw[-,black] (W6)--(P2);
 \draw[->,black] (P3)--(X6);
\draw[-,black] (P4)--(C3);
\draw[-,black] (X5)--(D2);
\draw[-,black] (Y5)--(F2);
\draw[->,black] (P1)--(C2);
 \draw[-,black] (L6)--(E2);
 \draw[-,black] (A6)--(E3);
\draw[->,black] (E4)--(C1);
\draw[-,black] (A5)--(D1);
\draw[-,black] (B5)--(F1);
\draw[->,black] (E1)--(C1);
\draw[-,black] (A3)--(D);
\draw[-,black] (B3)--(F);
\draw[->,black] (E)--(C);
\draw[-,black] (A2)--(G);
\draw[-,black] (B2)--(I);
\draw[->,black] (H)--(J);
\end{tikzpicture}

Fig.1 Three types for Category $\rm (C3)$.
\end{center}
\noindent From another perspective of the network, if the tensor is regarded as a vertex, two tensors are connected by the common vertices, which is independent of the type of tensors. In this way, the algebraic system generated by tensor chain addition is consistent for correlation networks, (high-order) logic networks and hybrid networks.

For any two different tensors  $t_1, t_2 \in T$ with common indexes,  a non-empty 2-chain is generated by chain addition $\oplus(t_1, t_2)$, and then if there exists $t_3 \in T$ such that $\oplus(t_1, t_3)$ is a non-empty 2-chain, then it is obvious that on the basis of 2-chain $\oplus(t_1, t_2)$, a subnetwork consisting of $t_1, t_2$ and $t_3$ is obtained by doing the chain addition  of $t_1$ and $t_3$. Such a subnetwork is expressed as $(t_1 \oplus t_2)\oplus t_3 = \oplus(t_1, t_2, t_3)$. On the other hand,  since 2-chains $\oplus(t_1, t_2)$ and $\oplus(t_1,t_3)$ have a common tensor $t_1$  it is obvious that a subnetwork consisting of $t_1, t_2$ and $t_3$ is obtained by reducing  the same tensor $t_1$ in the two 2-chains to one. Such a subnetwork is expressed as $(t_1 \oplus t_2)\oplus (t_1 \oplus t_3 )= \oplus(t_1, t_2, t_3)$. Obviously, the two subnetworks are the same. Notice that it is possible that both $\oplus(t_1, t_3)$ and $\oplus(t_2,t_3)$ are 2-chains. In this case, the former method does not specify whether $t_1$ and $t_3$ are  chain added or $t_2$ and $t_3$ are  chain added, so the result is not unique. The latter method reduces the same tensor to one to obtain a subnetwork while maintaining the 2-chain relationships, and such a subnetwork is unique. Therefore,  in the following the latter method is taken to generate subnetworks from the perspective of 2-chain reducing.

For any  $t_i \in T$,  we write $t_i = \oplus(t_i, t_i)$ and simply denoted by $l_{ii}$. For any two different tensors $t_i, t_j \in T$, if $t_i \oplus t_j = \oplus(t_i, t_j) \neq \emptyset$ then $\oplus(t_i, t_j)$ is simply written as $l_{ij}$, where the subscripts $i$ and $j$ of $l_{ij}$ are unordered, that is, $l_{ij} = l_{ji}$. Set $N = \{1, 2, \ldots, n\}$. Let $T_{n2}$ denotes the set of all non-empty 2-chains generated by all tensors in $T$, that is $T_{n2} = \{l_{ij}| i, j = 1, 2, \ldots, n; i \neq j; l_{ij} \neq \emptyset\}$. The {\it reducing} operation is defined in the following:

\begin{defn}\label{two-chain reducing}
Let $T_{n2}$ be the set of all non-empty 2-chains generated by all tensors in $T$. For any $l_{ii}= \oplus(t_i, t_i)$, $l_{jj}= \oplus(t_j, t_j)$, $l_{i_1j_1} = \oplus(t_{i_1}, t_{j_1})$ and $l_{i_2j_2} = \oplus(t_{i_2}, t_{j_2})$ in $T_{2n}$, we define a binary operation $\mathring{\cup}$, called {\it reducing}, by the rule that
$$l_{i_1j_1} \mathring{\cup} l_{i_2j_2}  =
\begin{cases}
\ \oplus(t_1, t_2, t_3)|\{t_1, t_2, t_3\}=\{t_{i_1},t_{j_1},t_{i_2},t_{j_2}\} & \mbox{if } \ l_{i_1j_1} \neq l_{i_2j_2}\\
 \ \ &\hspace{4mm}\mbox{and}\ (t_{i_1},t_{j_1}) \cap (t_{i_2},t_{j_2})\neq \emptyset \\
\ l_{i_1j_1}(\mbox{or} \  l_{i_2j_2} ) & \mbox{if} \ l_{i_1j_1} = l_{i_2j_2}\\
\ \emptyset & \mbox{otherwise,}
\end{cases}
$$
where $\oplus(t_1, t_2, t_3)$ denotes the subnetwork obtained by the chain addition of $l_{i_1j_1}$ and $l_{i_2j_2}$,  $\{t_1, t_2, t_3\}=\{t_{i_1},t_{j_1},t_{i_2},t_{j_2}\} $ means that $\{t_1, t_2, t_3\}$ is the set obtained by combining the same tensor as one in the set $\{t_{i_1},t_{j_1},t_{i_2},t_{j_2}\}$;
$$l_{ii} \mathring{\cup} l_{i_1j_1}  =
\begin{cases}
\ l_{i_1j_1} & \mbox{if } t_i \in (t_{i_1},t_{j_1}) \\
\ \emptyset & \mbox{otherwise,}
\end{cases}
$$
$$l_{ii} \mathring{\cup} l_{jj}  =
\begin{cases}
\ l_{ii} (\mbox{or} \ l_{jj})& \mbox{if } l_{ii}=l_{jj} \\
\ \emptyset & \mbox{otherwise.}
\end{cases}
$$
\end{defn}
It is natural to generalise Definition~\ref{two-chain reducing} to a set $S$ of 2-chains by reducing the same tensors of 2-chains in $S$ into one. This is a method to treat the arbitrarily set of 2-chains as a specific base and generate a specific chain through the reducing. Obviously, a set of specific  primitive structures corresponds to a specific chain one by one. A specific chain is a subnetwork. Therefore, any non empty subset of $T_{2n}$ of the network $\Gamma$ can generate a subnet of $\Gamma$ by reducing, and then all non empty subsets of $T_{2n}$ can generate all subnetworks of $\Gamma$ by reducing. Let $S$ be a non-empty subset of $\Gamma_{2n}$ and let $P$ be the set of all tensors forming 2-chains in $S$. $\oplus(P|S)$ denotes the network generated by tensors in $P$ with respect to 2-chains in $S$, that is , the network generated by reducing 2-chains in $S$.  So $\mathring{\cup}S = \oplus(P|S)$.

\begin{ex}\label{ex1}
 Let $S = \{t_{12}, t_{23}, t_{24}\}$ and $P = \{t_1, t_2, t_3, t_4\}$. $\oplus(P|S)$ is as follows.

\vspace{12mm}\setlength {\unitlength}{0.5cm}
\begin{picture}(27,7)
\put(8,3){$\bullet$} \put(7.3,3){$t_3$}
\put(10,7){$\bullet$}\put(9.5,7.1){$t_1$}
\put(10,5){$\bullet$}\put(9.3,5){$t_2$}
\put(12,3){$\bullet$}\put(12.4,3){$t_4$}

\put(10.15,5.3){\line(0,1){1.75}}
\put(8.25,3.3){\line(1,1){1.78}}
\put(12.1,3.3){\line(-1,1){1.8}}
\end{picture}
\end{ex}
\begin{defn}\label{connected}
Given a subnetwork $\oplus(P|S)$, if for any two different tensors $t_i, t_j \in P$, there exist $t_{k_1}, t_{k_2}, \ldots, t_{k_m} \in P$ such that $$ t_i = t_{k_1}, \ \oplus(t_{k_{\ell}}, t_{k_{\ell+1}}) \in S,\ t_{k_m} = t_j,$$
where $\ell = 1, 2, \ldots, m-1$, then $\oplus(P|S)$ is called a {\it connected subnetwork}.
\end{defn}

In the following, a connected subnetwork $\oplus(P|S)$ is referred as a {\it tensor chain}. $\oplus(P|S)$ is called a {\it $k$-chain} if $|P|=k$. In particular, tensors are 1-chains. In this paper, we only consider tensor chains.

Here we stress that as shown in Example~\ref{ex1}, a tensor chain is not necessarily a path as mentioned in traditional graph theory. It is easy to know that any tensor chain can be generated by reducing the same tensors of 2-chains. Obviously, there is a one to one correspondence between tensor chains and the set of 2-chains. So a tensor chain is represented by a set of 2-chains in this paper. To emphasize the tensors in a tensor chain, the tensor chain is usually written as $\oplus(P|S)$, where $S$ denotes the set of 2-chains and $P$ denotes the set of all tensors generating  2-chains belonging to $S$.

In the research of networks, compared with 2-chain's structures, the research of general tensor chains is more macro and rich. Therefore, in the following we focuses on the algebraic structure of the system consisting of tensor chains. Obviously, the new subnetwork obtained by joining two connected subnetworks  is still connected, so the reducing of the set of 2-chains can be extended to any two tensor chains. The following defines the reducing between two tensor chains.
\begin{defn}\label{Ladd}
For any two non-empty tensor chains $l_1 = \oplus(P_1|S_1)$  and $l_2 = \oplus(P_2|S_2) $,
$$l_1 \mathring{\cup} l_2 =
\begin{cases}
\ \mathring{\cup}(S_1 \cup S_2) & \mbox{if } \ P_1 \cap P_2 \neq \emptyset \\
\ \emptyset & \mbox{otherwise.}
\end{cases}
$$
\end{defn}

In \cite{Hala12}, as a generalization of groups, an algebraic system with a binary operation that does not satisfy the association law is discussed. This kind of algebraic system has an identity, and each element has  an inverse, but does not satisfy the association law, which is called {\it quasi-group}. Obviously, whether the associative law is satisfied or not is the fundamental difference between groups and quasi-groups. Similarly, as a generalization of semilattices, an algebraic system that do not satisfy the association law is called {\it quasi-semilattice} if it is idempotent commutative with respect to the binary operation. Naturally, whether the associative law is satisfied or not is the fundamental difference between semilattices and quasi-semilattices. The quasi-semilattice defined here is called {\it commutative idempotent magam} in~\cite{http13}. An example of quasi- semilattice with three elements is given in~\cite{http13} as follows.

\begin{center}
\begin{tabular}{c|ccc}
      $*$& $a$& $b$&$c$\\
\hline  $a$&$a$&$c$&$b$\\
  $b$&$c$&$b$&$a$\\
  $c$&$b$&$a$&$c$
\end{tabular}
\end{center}

The set of all non empty tensor chains generated by tensors in $T$  is called {\it the universal set of tensor chains of the network $\Gamma$}, which is recorded as $\oplus_{\mbox{all}}(t_1, t_2, \ldots, t_n)$. For convenience, ${\mathcal{L}}(\Gamma)$ denotes the union of $\oplus_{\mbox{all}}(t_1, t_2, \ldots, t_n)$ and  $\{\emptyset\}$ where $\emptyset$ is the empty relation, that is, ${\mathcal{L}(\Gamma)} = \oplus_{\mbox{all}}(t_1, t_2, \ldots, t_n) \cup \{\emptyset\}$. In the following, we show that $\mathcal{L}(\Gamma)$ forms a quasi-semilattice with respect to  $\mathring{\cup}$.

\begin{them}
For any general network $\Gamma$, $\mathcal{L}(\Gamma)$ forms a quasi-semilattice with respect to  $\mathring{\cup}$, called a {\it network quasi-semilattice}.
\end{them}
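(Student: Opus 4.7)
The plan is to verify the three defining conditions of a quasi-semilattice for $(\mathcal{L}(\Gamma), \mathring{\cup})$: closure, idempotency, and commutativity (associativity, by the very definition given just before the theorem, is not required). I would proceed in this order because closure is the only step that requires any real structural argument; the other two will follow almost directly from the symmetry in Definition~\ref{Ladd}.

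First, for closure, I would take arbitrary $l_1 = \oplus(P_1|S_1)$ and $l_2 = \oplus(P_2|S_2)$ in $\mathcal{L}(\Gamma)$ and split on the two cases of Definition~\ref{Ladd}. If $P_1 \cap P_2 = \emptyset$, then $l_1 \mathring{\cup} l_2 = \emptyset \in \mathcal{L}(\Gamma)$, and the same convention handles the cases involving $\emptyset$ as an absorbing element. If $P_1 \cap P_2 \neq \emptyset$, I need to check that $\mathring{\cup}(S_1 \cup S_2)$ is again a tensor chain, i.e.\ a connected subnetwork in the sense of Definition~\ref{connected}. Since each $l_i$ is connected, any two tensors $t, t' \in P_1 \cup P_2$ can be joined through a common tensor $t_0 \in P_1 \cap P_2$ by concatenating a chain inside $P_1$ from $t$ to $t_0$ with a chain inside $P_2$ from $t_0$ to $t'$; all the 2-chains used lie in $S_1 \cup S_2$, so the resulting subnetwork belongs to $\oplus_{\mathrm{all}}(t_1,\ldots,t_n) \subseteq \mathcal{L}(\Gamma)$.

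Second, for idempotency, I take $l = \oplus(P|S) \in \mathcal{L}(\Gamma)$. If $l = \emptyset$, then $\emptyset \mathring{\cup} \emptyset = \emptyset$ trivially. Otherwise $P \cap P = P \neq \emptyset$ and $S \cup S = S$, so $l \mathring{\cup} l = \mathring{\cup} S = \oplus(P|S) = l$, using the one-to-one correspondence between tensor chains and their sets of 2-chains noted in the paragraph following Definition~\ref{connected}. Third, for commutativity, the conditions $P_1 \cap P_2 \neq \emptyset$ and $S_1 \cup S_2 = S_2 \cup S_1$ are symmetric in $l_1$ and $l_2$, so $l_1 \mathring{\cup} l_2 = l_2 \mathring{\cup} l_1$ holds directly, with the $\emptyset$ case again handled separately.

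The main obstacle, and really the only nontrivial point, is the closure step: one has to be careful that the representation $(P_1 \cup P_2 \mid S_1 \cup S_2)$ actually encodes the tensor chain $\mathring{\cup}(S_1 \cup S_2)$ consistently, and that connectedness survives the union. The bijection between tensor chains and sets of 2-chains makes the first part bookkeeping, and the common tensor argument above settles the second. Nothing in the argument touches associativity, which is appropriate since, as emphasised before the theorem, the failure of associativity is exactly what distinguishes quasi-semilattices from semilattices.
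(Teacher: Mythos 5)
Your verification of closure, idempotency and commutativity matches the paper's (and your closure argument via a common tensor $t_0 \in P_1 \cap P_2$ is in fact more detailed than the paper's ``it is easy to see''). However, there is a genuine gap: you explicitly decline to address associativity, whereas under the definition adopted in the paper a quasi-semilattice is an idempotent commutative system that \emph{does not} satisfy the associative law. Failure of associativity is therefore part of what the theorem asserts, not something you may omit; a commutative idempotent magma that happened to be associative would be a semilattice, not a quasi-semilattice in the paper's sense. Accordingly, roughly half of the paper's proof is devoted to exhibiting non-associativity.

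Concretely, the paper takes three non-empty tensor chains $l_1 = \oplus(P_1|S_1)$, $l_2 = \oplus(P_2|S_2)$, $l_3 = \oplus(P_3|S_3)$ with $P_1 \cap P_2 \neq \emptyset$, $P_1 \cap P_3 \neq \emptyset$ and $P_2 \cap P_3 = \emptyset$. Then $(l_1 \mathring{\cup} l_2) \mathring{\cup} l_3 = \mathring{\cup}(S_1 \cup S_2 \cup S_3) \neq \emptyset$ because $(P_1 \cup P_2) \cap P_3 \supseteq P_1 \cap P_3 \neq \emptyset$, while $l_1 \mathring{\cup} (l_2 \mathring{\cup} l_3) = l_1 \mathring{\cup} \emptyset = \emptyset$, so the two bracketings disagree. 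You need to add this (or an equivalent) counterexample to complete the proof. You might also note, as a point the paper itself glosses over, that such a triple need not exist in every network (e.g.\ a network with only two tensors), so the statement ``for any general network'' is being read somewhat loosely; but in any case the argument for non-associativity cannot simply be skipped.
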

\begin{proof}
 It is easy to see that for any two elements $l_1, l_2 \in {\mathcal{L}}(\Gamma)$, we have $l_1 \mathring{\cup} l_2 \in {\mathcal{L}}(\Gamma)$, that is, ${\mathcal{L}}(\Gamma)$ is closed with respect to $\mathring{\cup}$. Clearly, $\emptyset \oplus \emptyset = \emptyset$ and for any non-empty tensor chain $l = \oplus(P|S)$, we have $l \mathring{\cup} l = \mathring{\cup} (S \cup S)= \mathring{\cup} S = l$. So every element of ${\mathcal{L}}(\Gamma)$ is idempotent. Naturally, $\mathring{\cup}$ is commutative as  the union of sets is commutative.

Now we show that the associative law does not hold.  Suppose that $l_1 = \oplus(P_1|S_1)$, $l_2 = \oplus(P_2|S_2)$ and $l_3 = \oplus(P_3|S_3)$ are three non-empty elements of ${\mathcal{L}}(\Gamma)$ such that $P_1 \cap P_2 \neq \emptyset$, $P_1 \cap P_3 \neq \emptyset$ and $P_2 \cap P_3 = \emptyset$. Then we have $l_1 \mathring{\cup} l_2 = \mathring{\cup}(S_1 \cup S_2) \neq \emptyset$ and $l_2 \mathring{\cup} l_3 = \emptyset$. Since $P_1 \cap P_3 \neq \emptyset$, it follows that $(P_1 \cup P_2) \cap P_3 \neq \emptyset$. Further, $(l_1 \mathring{\cup} l_2 ) \mathring{\cup} l_3 = \mathring{\cup}(S_1 \cup S_2) \mathring{\cup} l_3 = \mathring{\cup}(S_1 \cup S_2 \cup S_3) \neq \emptyset$  and $l_1 \mathring{\cup} (l_2 \mathring{\cup} l_3) =l_1 \mathring{\cup} \emptyset =\emptyset $, which implies that  $(l_1 \mathring{\cup} l_2 ) \mathring{\cup} l_3 \neq l_1 \mathring{\cup} (l_2 \mathring{\cup} l_3)$.
\end{proof}

\section{Subalgebra of  network quasi-semilattice ${\mathcal{L}}(\Gamma)$}\label{sec:subalgebra}

The subalgebra structure of network quasi-semilattice ${\mathcal{L}}(\Gamma)$ is studied by means of equivalence relations and congruences on ${\mathcal{L}}(\Gamma)$.

\subsection{Congruences}

 Certainly there exists two natural ways to classify tensor chains. One is by the number of tensors generating a chain, the other is the tensors generating a chain. According to the two classifying ways we define two relations $\sigma$ and $\delta$ on ${\mathcal{L}}(\Gamma)$ as follows.

\begin{defn}
For any $l_1 = \oplus(P_1|S_1)$ and $l_2 = \oplus(P_2|S_2)$,

$\rm (i)$ $(l_1, l_2) \in \sigma$ if $|P_1| = |P_2|$;

$\rm (ii)$ $(l_1, l_2) \in \delta$ if $P_1 = P_2$.
\end{defn}

The following lemma presents that $\sigma$ and $\delta$ are equivalences.

\begin{lem}\label{TwoEquivalence}
Relations $\sigma$ and $\delta$ are equivalent relations.
\end{lem}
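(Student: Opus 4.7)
The plan is to verify the three defining axioms of an equivalence relation---reflexivity, symmetry, and transitivity---for each of $\sigma$ and $\delta$ directly from their definitions. Both relations are, in essence, pulled back along a well-defined invariant of a tensor chain: $\sigma$ along the map $l = \oplus(P|S) \mapsto |P|$ into $\mathbb{N}$, and $\delta$ along the map $l \mapsto P$ into the power set $\mathcal{P}(T)$. Since equality of cardinalities and equality of subsets of $T$ are each equivalence relations on their codomains, the general fact that the kernel of any map is an equivalence relation on its domain should dispatch both relations simultaneously.

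The one point that deserves a moment of care is that the invariant $P$ really is a function of the tensor chain $l$ itself, and not merely of the chosen presentation $\oplus(P|S)$. For a non-empty tensor chain, $P$ is the set of primitive structures (tensors) that occur in the subnetwork, which is intrinsic to the subnetwork and is therefore uniquely determined by $l$. Once $P$ is well-defined as a function of $l$, its cardinality $|P|$ is automatically well-defined, so both $\sigma$ and $\delta$ are unambiguously given. The empty chain $\emptyset$ can be handled by the convention $P = \emptyset$, so $|P| = 0$; under this convention $\emptyset$ is related only to itself under both $\sigma$ and $\delta$.

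After this, the verification is routine. Reflexivity follows from $|P| = |P|$ and $P = P$; symmetry follows from the symmetry of equality in each codomain; transitivity follows from the transitivity of equality for natural numbers and for sets of tensors. I do not anticipate any real obstacle: the lemma is a bookkeeping statement that fixes the notation needed for the subsequent analysis, where one checks that $\delta$ is a congruence on $(\mathcal{L}(\Gamma), \mathring{\cup})$ and that each $\delta$-class is a semilattice. The more substantive work will come in those later results, not here.
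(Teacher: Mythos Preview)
Your proposal is correct and is essentially the same direct verification of reflexivity, symmetry, and transitivity that the paper gives, just phrased more abstractly via the observation that $\sigma$ and $\delta$ are kernels of the maps $l \mapsto |P|$ and $l \mapsto P$. The paper does not explicitly raise the well-definedness of $P$, but your remark on it is fine and does no harm.
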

 \begin{proof}
 Suppose that $l_1 = \oplus(P_1|S_1)$, $l_2 = \oplus(P_2|S_2)$ and $l_3 = \oplus(P_3|S_3)$ are elements of ${\mathcal{L}}(\Gamma)$.

 Now we show that $\sigma$ is equivalent. $\rm (i)$ For any $l = \oplus(P|S) \in {\mathcal{L}}(\Gamma)$, we have $(l, l) \in \sigma$ as $|P| = |P|$.  Thus $\sigma$ is reflexive. $\rm (ii)$ If $(l_1, l_2) \in \sigma$ then $|P_1| = |P_2|$ which implies that $(l_2, l_1) \in \sigma$. So $\sigma$ is symmetric. $\rm (iii)$ If $(l_1, l_2) \in \sigma$ and $(l_2, l_3) \in \sigma$ then $|P_1| = |P_2|$ and $|P_2| = |P_3|$, so we have $|P_1| = |P_3|$ which follows that $(l_1, l_3) \in \sigma$. Thus $\sigma$ is transitive.

 It is to see that $\delta$ is reflexive as for any $l = \oplus(P|S) \in {\mathcal{L}}(\Gamma)$, we have $(l, l) \in \delta$ as $P = P$. If $(l_1, l_2) \in \delta$ then $P_1 = P_2$ which follows that $(l_2, l_1) \in \delta$ and so $\delta$ is symmetric. Suppose that $(l_1, l_2) \in \delta$ and $(l_2, l_3) \in \delta$ then $P_1 = P_2$ and $P_2 = P_3$ which implies that $P_1 = P_3$ and so $\delta$ is transitive.
 \end{proof}

An equivalence on an algebraic system corresponds to a partition on the object set of the  system which is independent of the operation of algebraic system. An equivalence $\rho$ on an algebraic system $(A, *)$ is a congruence with respect to  $*$ if $\rho$ is left and right compatible with respect to $*$, that is, for any $a, b, c \in A$, if $(a, b) \in \rho$ then $(c * a, c * b) \in \rho$ and $(a * c, b * c) \in \rho$. If the operation is commutative, then a left compatible relation is also a right compatible.

In $({\mathcal{L}}(\Gamma), \mathring{\cup})$, $\sigma$ is not a congruence as it is not compatible. Clearly, $\mathring{\cup}$ is commutative so it is sufficient to show that $\sigma$ is not left compatible with respect to $\mathring{\cup}$. Let $l_1, l_2, l_3 \in {\mathcal{L}}(\Gamma)$ be such that $(l_1, l_2) \in \sigma$, $l_3 \mathring{\cup} l_1 = \emptyset$ and $l_3 \mathring{\cup} l_2 \neq \emptyset$. As $(\emptyset, l_3 \mathring{\cup} l_2) \notin \sigma$ we get $(l_3 \mathring{\cup} l_1, l_3 \mathring{\cup} l_2) \notin \sigma$ which follows that $\sigma$ is not left compatible.

\begin{lem}
Relation $\delta$ is a congruence on ${\mathcal{L}}(\Gamma)$.
\end{lem}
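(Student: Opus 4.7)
The plan is to exploit the commutativity of $\mathring{\cup}$ established in the previous theorem so that I only need to verify left compatibility: whenever $(l_1, l_2) \in \delta$ and $l_3 \in {\mathcal{L}}(\Gamma)$, I must show $(l_3 \mathring{\cup} l_1, l_3 \mathring{\cup} l_2) \in \delta$. Write $l_i = \oplus(P_i|S_i)$ for $i = 1,2,3$, and note that by hypothesis $P_1 = P_2$.

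My first move would be to split into cases according to Definition~\ref{Ladd}. If $P_3 \cap P_1 = \emptyset$, then because $P_1 = P_2$ we also have $P_3 \cap P_2 = \emptyset$, so both $l_3 \mathring{\cup} l_1$ and $l_3 \mathring{\cup} l_2$ collapse to $\emptyset$; the pair is then trivially in $\delta$ (using the convention that the empty relation has empty tensor set, so $\delta$ is reflexive on $\emptyset$). Otherwise $P_3 \cap P_1 = P_3 \cap P_2 \neq \emptyset$, and Definition~\ref{Ladd} gives $l_3 \mathring{\cup} l_i = \mathring{\cup}(S_3 \cup S_i)$ for $i = 1, 2$.

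The key small observation to invoke here is that for any non-empty set $S$ of 2-chains, the resulting tensor chain $\mathring{\cup} S = \oplus(P|S)$ has its tensor set $P$ equal to the union of the tensor sets of the constituent 2-chains (this is exactly the content of the discussion following Definition~\ref{two-chain reducing}). Applying this to $S_3 \cup S_1$ and $S_3 \cup S_2$, the tensor set of $l_3 \mathring{\cup} l_1$ is $P_3 \cup P_1$, and that of $l_3 \mathring{\cup} l_2$ is $P_3 \cup P_2$. Since $P_1 = P_2$, these two tensor sets coincide, so $(l_3 \mathring{\cup} l_1, l_3 \mathring{\cup} l_2) \in \delta$ as required.

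The conceptually delicate part is really the bookkeeping of the empty-chain case and making sure the convention for $\delta$ on $\emptyset$ is consistent with what was used implicitly in Lemma~\ref{TwoEquivalence}; once that is pinned down, the argument is essentially a one-line consequence of the fact that reducing is defined by taking unions of 2-chain sets, and union preserves equal-set equalities. No computation on individual indices or tensor structures is needed, since $\delta$ only sees which tensors participate, not how they are linked.
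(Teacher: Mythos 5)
Your proposal is correct and follows essentially the same route as the paper's own proof: reduce to left compatibility via commutativity of $\mathring{\cup}$, split on whether $P_3 \cap P_1$ is empty, and in the non-empty case observe that the tensor set of $l_3 \mathring{\cup} l_i$ is $P_3 \cup P_i$, which is unchanged when $P_1 = P_2$. The only difference is that you make explicit the convention for $\delta$ on the empty element, which the paper leaves implicit.
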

\begin{proof}
It is sufficient to show that $\delta$ is left compatible since $\delta$ is an equivalence by Lemma~\ref{TwoEquivalence} and $\mathring{\cup}$ is commutative. Let $l_1 = \oplus(P_1|S_1)$, $L_2 = \oplus(P_2|S_2)$ and $l_3 = \oplus(P_3|S_3)$ be elements of ${\mathcal{L}}(\Gamma)$ such that $(l_1, l_2) \in \delta$. So we have $P_1 = P_2$ which follows that $P_1 \cap P_3 = P_2 \cap P_3$. if $P_1 \cap P_3 = \emptyset$ we have $(l_3 \mathring{\cup} l_1, l_3 \mathring{\cup} l_2) \in \delta$ as $l_3 \mathring{\cup} l_1 = l_3 \mathring{\cup} l_2 = \emptyset$. While if $P_1 \cap P_3 \neq \emptyset$, we get $l_3 \mathring{\cup} l_1 = \mathring{\cup}(S_1 \cup S_3) = \oplus(P_1 \cup P_3|S_1 \cup S_3)$ and $l_3 \mathring{\cup} l_2 = \mathring{\cup}(S_2 \cup S_3) = \oplus(P_2 \cup P_3|S_2 \cup S_3)$. It follows from $P_1 = P_2$ that $P_1 \cup P_3 = P_2 \cup P_3$ and so $(l_3 \mathring{\cup} l_1, l_3 \mathring{\cup} l_2) \in \delta$.
\end{proof}

Let $l_1 = \oplus(P_1|S_1)$ and $l_2 = \oplus(P_2|S_2)$ be elements of ${\mathcal{L}}(\Gamma)$ such that $|P_1| = |P_2| = k \in \mathbb{N}$. If $|P_1 \cap P_2| = m (0< m <k)$, then we obtain a $2k-m$-chain $l_1 \mathring{\cup} l_2$. If $P_1 \cap P_2 = \emptyset$, we get $l_1 \mathring{\cup} l_2 = \emptyset$. So in general $\mathring{\cup}$ is impossible closed in a $\sigma$-equivalent class. Any two chains in the same $\delta$-equivalent class are generated by the same tensors so their reducing result is still in the same equivalent class. Thus each $\delta$-class forms a subalgebra of ${\mathcal{L}}(\Gamma)$ but a $\sigma$-class is impossible a subalgebra of $\mathcal{L}(\Gamma)$.

\subsection{Semilattices}

In this subsection we focus on $\delta$-classes. We recall that a semilattice is an idempotent commutative semigroup.

\begin{defn}\label{subsemilattice}
Let $Y$ be a quasi-semilattice. A a non-empty set $H$ of $Y$ is called a {\it sub-semilattice} of $Y$ if $H$ forms a semilattice with respect to the operation in $Y$.
\end{defn}

\begin{prop}\label{semilattice}
Each $\delta$-class of ${\mathcal{L}}(\Gamma)$ is a subsemilattcie of ${\mathcal{L}}(\Gamma)$ with respect to  $\mathring{\cup}$.
\end{prop}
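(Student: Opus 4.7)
The plan is to isolate what is new here: idempotence and commutativity of $\mathring{\cup}$ have already been established for the whole ambient quasi-semilattice $\mathcal{L}(\Gamma)$ in the preceding theorem, so passing to a $\delta$-class costs nothing on those two axioms. What remains is closure of $\mathring{\cup}$ inside a $\delta$-class and, crucially, associativity, which is exactly the axiom that distinguishes a semilattice from a quasi-semilattice. The guiding observation is that elements of a single $\delta$-class all have the uniform shape $\oplus(P|S)$ with the tensor set $P$ fixed; only the 2-chain set $S$ varies.

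First I would verify closure. Given $l_1 = \oplus(P|S_1)$ and $l_2 = \oplus(P|S_2)$ in a fixed $\delta$-class with $P \neq \emptyset$, one has $P \cap P = P \neq \emptyset$, so Definition~\ref{Ladd} gives $l_1 \mathring{\cup} l_2 = \mathring{\cup}(S_1 \cup S_2) = \oplus(P|S_1 \cup S_2)$, which still has $P$ as its generating tensor set and therefore lies in the same $\delta$-class. (The degenerate case where the class is $\{\emptyset\}$ is a singleton and trivially a semilattice, so I would dispose of it in one line.)

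Next I would check associativity by direct unfolding. For three elements $l_i = \oplus(P|S_i)$, $i = 1, 2, 3$, each intermediate intersection of tensor sets is again $P$, hence non-empty, so neither bracketing collapses to $\emptyset$. Computing using Definition~\ref{Ladd},
\[
(l_1 \mathring{\cup} l_2) \mathring{\cup} l_3 = \oplus(P|S_1 \cup S_2) \mathring{\cup} l_3 = \oplus(P|S_1 \cup S_2 \cup S_3)
\]
and similarly $l_1 \mathring{\cup} (l_2 \mathring{\cup} l_3) = \oplus(P|S_1 \cup S_2 \cup S_3)$. The two sides coincide because set union is associative. So inside a $\delta$-class $\mathring{\cup}$ acts as the union operation on 2-chain sets carrying a common fixed $P$.

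The main obstacle is conceptual rather than technical. The non-associativity exhibited in the proof of the quasi-semilattice theorem arose precisely when some pairwise intersections of the tensor-generating sets were empty while others were non-empty, causing one bracketing to vanish and the other not. Confining attention to a $\delta$-class removes this pathology by construction, since all generating tensor sets coincide and are non-empty, so the obstruction to associativity cannot arise. That is the entire content of the proposition.
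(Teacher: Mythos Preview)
Your proof is correct and follows essentially the same approach as the paper: both dispose of the $\{\emptyset\}$-class separately, observe that idempotence and commutativity are inherited, verify closure via $l_1 \mathring{\cup} l_2 = \oplus(P|S_1 \cup S_2)$, and reduce associativity inside a $\delta$-class to associativity of set union on the 2-chain sets. Your closing paragraph explaining why the obstruction to associativity disappears once $P$ is fixed is a nice conceptual addition not made explicit in the paper.
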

\begin{proof}
Clearly, the $\delta$-class of  the empty element $\emptyset$ only contains $\emptyset$ so it is a semilattice with respect to $\mathring{U}$.

For any non-empty element $l \in {\mathcal{L}}(\Gamma)$, if $l = \oplus(P|S) \in {\mathcal{L}}(\Gamma)$, then according to the definition of $\delta$,  $l\delta$ consists of the tensor chains generated by $P$. Now we show that $l\delta$ is a semilattice with respect to $\mathring{\cup}$. It is sufficient to show that $\mathring{\cup}$ is closed and associative in $l\delta$ as $\mathring{\cup}$ is commutative and every element of ${\mathcal{L}}(\Gamma)$ is idempotent.  It is easy to see that $\mathring{\cup}$ is closed in $l\delta$ because for any $l_1, l_2 \in l\delta$ with $l_1 = \oplus(P_1|S_1)$ and $l_2 = \oplus(P_2|S_2)$ , we have $P_1 = P_2 = P$, and so $l_1 \mathring{\cup} l_2 = \oplus(P|S_1 \cup S_2) \in l\delta$. To show that $\mathring{\cup}$ is associative in $l\delta$, suppose that  $l_1 = \oplus(P|S_1)$, $l_2 = \oplus(P|S_2)$ and $l_3 = \oplus(P|S_3)$ are elements of $l\delta$. As their tensor sets of $l_1$, $l_2$ and $l_3$ are the same, that is $P$,  then the tensor set of their reducing result is still $P$, and also we have $(l_1 \mathring{\cup} l_2) \mathring{\cup} l_3 = \mathring{\cup}(S_1 \cup S_2) \mathring{\cup} l_3 = \mathring{\cup}(S_1 \cup S_2 \cup S_3) = l_1 \mathring{\cup} (\mathring{\cup}(S_2 \cup S_3)) = l_1 \mathring{\cup} (l_2 \mathring{\cup} l_3)$, which implies that $\mathring{\cup}$ is associative in $l\delta$. Consequently, $l\delta$ is a semilattice with respect to $\mathring{\cup}$.  According to Definition~\ref{subsemilattice} $l\delta$ is a subsemilattice of ${\mathcal{L}}(\Gamma)$.
\end{proof}

A $\sigma$-equivalence class consisting of $k$ tensors in ${\mathcal{L}}(\Gamma)$ is the set of all $k$-chains. Each $k$-chain only belongs to one $\delta$ class. Therefore, a $\sigma$-equivalence class with  $k$ tensors is the disjoint union of all $\delta$-equivalence classes with $k$ tensors, that is, the disjoint union of sub-semilattices. The relationship between different $\delta$-equivalence classes in the same $\sigma$-equivalence class is discussed below.

Let $l_1 = \oplus(P_1|S_1)$ and $l_2 = \oplus(P_2|S_2)$ be two non-empty elements of ${\mathcal{L}}(\Gamma)$  with $|P_1| = k_1$ and $|P_2| =k_2$, and let $S_{P_1}$ and $S_{P_2}$ be the set of all 2-chains generated by $P_1$ and $P_2$, respectively, that is,
$$S_{P_1} = \{l_{i_1j_1}|i_1, j_1 = 1, 2, \ldots, k_1; i_1 \neq j_1; l_{i_1j_1} \neq \emptyset\},$$
$$S_{P_2} = \{l_{i_2j_2}|i_2, j_2= 1, 2, \ldots, k_2; i_2 \neq j_2; l_{i_2j_2} \neq \emptyset\}.$$

\begin{prop}
If $|P_1|=|P_2|$, $|S_{P_1}|=|S_{P_2}|$ and there exist two bijections $\theta: P_1 \rightarrow P_2$ and $\tau: S_{P_1} \rightarrow S_{P_2}$ with $\tau(l_{i_1j_1}) = l_{\theta(i_1)\theta(j_1)} \in S_{P_2}$ then $l_1\delta$ and $l_2\delta$ are isomorphic semilattices with respect to $\mathring{\cup}$ where $l_1 = \oplus(P_1|S_1)$ and $l_2 = \oplus(P_2|S_2)$.
\end{prop}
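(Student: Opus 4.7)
The plan is to exhibit an explicit isomorphism $\Phi : l_1\delta \to l_2\delta$ built from the given bijections $\theta$ and $\tau$. By Proposition~\ref{semilattice}, each of $l_1\delta$ and $l_2\delta$ is already a semilattice under $\mathring{\cup}$, so the task reduces to producing a bijection between them that respects $\mathring{\cup}$.

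First I would describe the elements of $l_1\delta$ concretely: an element is of the form $\oplus(P_1|S)$ where $S \subseteq S_{P_1}$ is such that the reducing produces a tensor chain (a connected subnetwork in the sense of Definition~\ref{connected}) using exactly the tensors in $P_1$. I will call such an $S$ \emph{admissible for $P_1$}. The same description applies for $l_2\delta$ with $P_2$ and $S_{P_2}$. Next I would extend $\tau$ to the power set by setting $\hat\tau(S) = \{\tau(l_{ij}) : l_{ij}\in S\}$; since $\tau$ is a bijection, so is $\hat\tau : \mathcal{P}(S_{P_1}) \to \mathcal{P}(S_{P_2})$, and clearly $\hat\tau(S \cup S') = \hat\tau(S)\cup\hat\tau(S')$.

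The key observation is that the pair $(\theta,\tau)$ is an incidence-preserving isomorphism between the ``2-chain graphs'' on $P_1$ and $P_2$: the rule $\tau(l_{i_1 j_1}) = l_{\theta(i_1)\theta(j_1)}$ says exactly that $\tau$ sends a 2-chain on tensors $\{t_{i_1},t_{j_1}\}$ to the 2-chain on $\{\theta(t_{i_1}),\theta(t_{j_1})\}$. Hence a sequence $t_{k_1},t_{k_2},\ldots,t_{k_m}$ witnessing connectedness of $\oplus(P_1|S)$ via 2-chains in $S$ transports under $\theta$ to a sequence witnessing connectedness of $\oplus(P_2|\hat\tau(S))$ via 2-chains in $\hat\tau(S)$; combined with $\theta(P_1)=P_2$, this shows that $\hat\tau$ restricts to a bijection from the admissible subsets for $P_1$ to those for $P_2$. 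I can therefore define $\Phi : l_1\delta \to l_2\delta$ by $\Phi(\oplus(P_1|S)) = \oplus(P_2|\hat\tau(S))$, and it is a well-defined bijection.

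Finally, for the homomorphism property: given $l = \oplus(P_1|S)$ and $l' = \oplus(P_1|S')$ in $l_1\delta$, Definition~\ref{Ladd} gives $l \mathring{\cup} l' = \oplus(P_1|S\cup S')$ since $P_1\cap P_1 = P_1 \neq \emptyset$, so
\[
\Phi(l \mathring{\cup} l') = \oplus(P_2|\hat\tau(S\cup S')) = \oplus(P_2|\hat\tau(S)\cup\hat\tau(S')) = \Phi(l)\mathring{\cup}\Phi(l'),
\]
the last equality using that both $\Phi(l),\Phi(l')\in l_2\delta$ share the tensor set $P_2$. The main subtle point will be step three, namely verifying that admissibility (connectedness plus using \emph{all} of $P_1$) is preserved under $\hat\tau$; once the graph-isomorphism viewpoint on $(\theta,\tau)$ is made explicit, this is routine, but it is the one place where the hypotheses $|P_1|=|P_2|$, $|S_{P_1}|=|S_{P_2}|$ and the compatibility $\tau(l_{i_1j_1})=l_{\theta(i_1)\theta(j_1)}$ are all used together.
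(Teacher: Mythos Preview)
Your proposal is correct and follows essentially the same approach as the paper: both construct the isomorphism by sending $\oplus(P_1\mid S)$ to $\oplus(P_2\mid \tau(S))$ and verify bijectivity and the homomorphism identity via $\tau(S\cup S')=\tau(S)\cup\tau(S')$. If anything, your treatment is slightly more careful, since you explicitly check that admissibility (connectedness together with the use of all tensors in $P_1$) is preserved under $\hat\tau$, a point the paper's argument leaves implicit when asserting that $\eta$ is well defined and surjective.
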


\begin{proof}
It follows from Proposition~\ref{semilattice} that $l_1\delta$ and $l_2\delta$ are subsemilattices of ${\mathcal{L}}(\Gamma)$. Assume that $|P_1|=|P_2|$, $|S_{P_1}|=|S_{P_2}|$ and there exist two bijections $\theta: P_1 \rightarrow P_2$ and $\tau: S_{P_1} \rightarrow S_{P_2}$ with $\tau(l_{i_1j_1}) = l_{\theta(i_1)\theta(j_1)} \in S_{P_2}$. Let $\eta: l_1\delta \rightarrow l_2\delta$ be a map with for any $l =\oplus(P_1|S_l) \in l_1\delta$, $\eta(l) = \oplus(\theta(P_1)|\tau(S_l))$, where $\theta(P_1) = P_2$ and $\tau(S_1) = \{\tau(l_{i_1j_1})|l_{i_1j_1} \in S_1 \subseteq S_{P_2}\}$. Clearly, $\eta$ is well defined.

To show that $\eta$ is injective, we suppose that $l =\oplus(P_1|S_l)$ and $l' = \oplus(P_1|S_{l'})$ are two elements of $l_1\delta$ such that $\eta(l) = \eta(l')$. Then $\mathring{\cup}\tau(S_l) = \mathring{\cup}\tau(S_{l'})$. Further by Definition~\ref{two-chain reducing} and its generalization on tensor chains, we obtain that $\tau(S_l) = \tau(S_{l'})$. As $\tau: S_{P_1} \rightarrow S_{P_2}$ is a bijection it follows that $S_l = S_{l'}$, which implies that $l = l'$.

Next we show that $\eta$ is surjective. For any $l'' \in l_2\delta$ with $l'' = \oplus(P_2|S_{l''})$. Since $\tau: S_{P_1} \rightarrow S_{P_2}$ is a bijection, it follows that $\tau^{-1}(S_{l''}) \subseteq S_{P_1}$. As the tensor set generating the 2-chains in $S_{l''}$ is $P_2$ and $\theta: P_1 \rightarrow P_2$ is a bijection, we obtain that $P_1$ is the tensor set generating the 2-chains in $\tau^{-1}(S_{l''})$ and so $\mathring{\cup}\tau^{-1}(S_{l''}) \in l_1\delta$ such that $\eta(\mathring{\cup}\tau^{-1}(S_{l''})) = l''$.

Finally, we claim that $\eta$ is a homomorphism. For any two elements $l =\oplus(P_1|S_l)$ and $l' = \oplus(P_1|S_{l'})$ of $l_1\delta$, we have
$\eta(l_1 \mathring{\cup} l_2) = \eta(\oplus(P_1|S_l \cup S_{l'}) = \oplus(\theta(P_1)|\tau(S_l \cup S_{l'})) = \mathring{\cup} (\tau(S_l \cup S_{l'}) = \oplus(P_2|\tau(S_l)) \mathring{\cup} \oplus(P_2|\tau(S_{l'})) = \eta(l) \mathring{\cup} \eta(l')$.

To sum up, $\eta: l_1\delta \rightarrow l_2\delta$ is an isomorphism.
\end{proof}

$\delta$ is a congruence on ${\mathcal{L}}(\Gamma)$ and tensor chains in the same $\delta$-class consist of the same tensor set. So if  the intersection of the tensor sets of two different $\delta$-classes is non-empty then the reducing of two chains from the two different $\delta$-classes belongs to a $\delta$-class whose tensor set is the union of  tensor sets of the two different $\delta$-classes, otherwise, the reducing of two chains from the two different $\delta$-classes is the empty element $\emptyset$. It means that $\mathring{\cup}$ deduces a binary operation in the set of $\delta$-classes of ${\mathcal{L}}(\Gamma)$ as follows:

Let ${\mathcal{L}}(\Gamma)/\delta$ be the set of all the $\delta$-classes of ${\mathcal{L}}(\Gamma)$, that is ${\mathcal{L}}(\Gamma)/\delta = \{l\delta| l \in {\mathcal{L}}(\Gamma)\}$. For any two element $l_1\delta, l_2\delta \in {\mathcal{L}}(\Gamma)/\delta$,
\[(l_1\delta) * (l_2\delta) = (l_1 \mathring{\cup} l_2)\delta.\]

\begin{lem}
The set ${\mathcal{L}}(\Gamma)/\delta$ forms a quasi-semilattice with respect to $*$ defined above.
\end{lem}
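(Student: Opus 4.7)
The plan is to check that $*$ inherits its quasi-semilattice properties from $\mathring{\cup}$ by using the congruence lemma just proved. Since $\delta$ is a congruence on $(\mathcal{L}(\Gamma), \mathring{\cup})$, the induced operation $*$ will be well defined, and idempotence together with commutativity will descend automatically to the quotient. Non-associativity (which distinguishes a genuine quasi-semilattice from a semilattice, in the spirit of the main theorem of Section~\ref{sec:quasi-semilattice}) will require lifting a specific counterexample to the quotient level.

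First I would verify well-definedness. Given $(l_1, l_1'), (l_2, l_2') \in \delta$, the congruence property yields $(l_1 \mathring{\cup} l_2, l_1' \mathring{\cup} l_2') \in \delta$, so $(l_1 \mathring{\cup} l_2)\delta = (l_1' \mathring{\cup} l_2')\delta$; hence the rule $(l_1\delta)*(l_2\delta) = (l_1 \mathring{\cup} l_2)\delta$ does not depend on the choice of representatives, and closure in $\mathcal{L}(\Gamma)/\delta$ is immediate. Commutativity of $*$ then follows at once from that of $\mathring{\cup}$:
\[(l_1\delta)*(l_2\delta) = (l_1 \mathring{\cup} l_2)\delta = (l_2 \mathring{\cup} l_1)\delta = (l_2\delta)*(l_1\delta).\]
For idempotence, since $l \mathring{\cup} l = l$ for every $l \in \mathcal{L}(\Gamma)$ (as already recorded in the proof of the main theorem), one has $(l\delta)*(l\delta) = (l \mathring{\cup} l)\delta = l\delta$.

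To exhibit the failure of associativity I would reuse the triple constructed in the main theorem. Pick non-empty $l_1 = \oplus(P_1|S_1)$, $l_2 = \oplus(P_2|S_2)$, $l_3 = \oplus(P_3|S_3)$ with $P_1 \cap P_2 \neq \emptyset$, $P_1 \cap P_3 \neq \emptyset$ and $P_2 \cap P_3 = \emptyset$. Then $(l_1 \mathring{\cup} l_2) \mathring{\cup} l_3$ is a non-empty chain on the tensor set $P_1 \cup P_2 \cup P_3$, whereas $l_1 \mathring{\cup} (l_2 \mathring{\cup} l_3) = \emptyset$. These two outcomes have different underlying tensor sets and therefore lie in different $\delta$-classes, so
\[\bigl((l_1\delta)*(l_2\delta)\bigr)*(l_3\delta) \neq (l_1\delta)*\bigl((l_2\delta)*(l_3\delta)\bigr).\]

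The main obstacle is conceptual rather than computational: one must ensure that the failure of associativity present in $\mathcal{L}(\Gamma)$ is not accidentally smoothed out by the quotient, since in principle a congruence can collapse non-associative behaviour into trivial identities on classes. What rescues the argument is that $\delta$ identifies two chains exactly when they share the same underlying tensor set, so a non-empty tensor chain and the empty element are automatically in different $\delta$-classes. Beyond this observation, the remaining verification is essentially a mechanical transfer of the three quasi-semilattice properties across the congruence $\delta$.
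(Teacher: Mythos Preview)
Your proposal is correct and follows essentially the same approach as the paper: well-definedness from the congruence property of $\delta$, idempotence and commutativity inherited directly from $\mathring{\cup}$, and non-associativity witnessed by the same triple $l_1,l_2,l_3$ used in the main theorem. Your extra remark that a non-empty chain and $\emptyset$ lie in distinct $\delta$-classes is exactly the point needed to ensure the counterexample survives the quotient, and the paper relies on it implicitly.
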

\begin{proof}
We first show that $*$ is well defined. Suppose that $l_1\delta, l'_1\delta, l_2\delta, l'_2\delta \in {\mathcal{L}}(\Gamma)/\delta$ are such that $l_1\delta = l'_1\delta$ and $l_2\delta = l'_2\delta$. We can deduce that $(l_1\delta) * (l_2\delta) = (l_1 \mathring{\cup} l_2)\delta = (l'_1 \mathring{\cup} l_2)\delta  = (l'_1 \mathring{\cup} l'_2)\delta = (l'_1\delta) * (l'_2\delta)$ as $\delta$ is  left and right compatible. Thus $*$ is well defined.

It is easy to see that for any $l\delta \in {\mathcal{L}}(\Gamma)/\delta$, we have $(l\delta) * (l\delta) = (l \mathring{\cup} l)\delta = l\delta$. So each element of ${\mathcal{L}}(\Gamma)/\delta$ is idempotent. As $\mathring{\cup}$ is commutative we have for any  $l_1\delta, l_2\delta \in {\mathcal{L}}(\Gamma)/\delta$, $(l_1\delta) * (l_2\delta) = (l_1 \mathring{\cup} l_2)\delta = (l_2 \mathring{\cup} l_1)\delta = (l_2\delta) * (l_1\delta)$.

Finally we show that $*$ is not associative. Suppose that $l_1 = \oplus(P_1|S_1)$, $l_2 = \oplus(P_2|S_2)$ and $l_3 = \oplus(P_3|S_3)$ are elements of ${\mathcal{L}}(\Gamma)$ such that $P_1 \cap P_2 \neq \emptyset$, $P_1 \cap P_3 \neq \emptyset$, and $P_2 \cap P_3 = \emptyset$. Then
\[(l_1\delta * l_2\delta) * l_3\delta = (l_1 \mathring{\cup} l_2)\delta * l_3\delta = ((l_1 \mathring{\cup} l_2) \mathring{\cup} l_3) \delta = \mathring{\cup}(S_1 \cup S_2 \cup S_3) \neq \emptyset\delta,\]
\[l_1\delta * (l_2\delta * l_3\delta) = l_1\delta * (l_2 \mathring{\cup} l_3)\delta = l_1\delta * \emptyset\delta = (l_1 \mathring{\cup} \emptyset)\delta = \emptyset\delta,\]
which implies that $*$ is not associative.
\end{proof}

Naturally, there is a map from $({\mathcal{L}}(\Gamma), \mathring{\cup})$ to $({\mathcal{L}}(\Gamma)/\delta, *)$. In the following a homomorphism between two quasi-semilattices is used to describe the map.

\begin{defn}
Let $(Y, \cdot)$ and $(Y', \circ)$ be two quasi-semilattice, and let $\xi$ is a map from $Y$ to $Y'$.  The map $\xi$ is a {\it homomorphism} if for any $a, b \in Y$, $\xi(a \cdot b) = \xi(a) \circ \xi(b)$. A homomorphism $\xi$ is an {\it epimorphism} if $\xi$ is surjective.
\end{defn}

\begin{them}
The map $\chi: {\mathcal{L}}(\Gamma) \rightarrow {\mathcal{L}}(\Gamma)/\delta$ with $\chi(l) = l\delta$ for any $l \in {\mathcal{L}}(\Gamma)$
 is an epimorphism.
 \end{them}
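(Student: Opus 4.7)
The plan is to verify the two defining conditions for an epimorphism of quasi-semilattices, namely that $\chi$ respects the operations and that $\chi$ is surjective. Both are essentially forced by how $*$ was defined on ${\mathcal{L}}(\Gamma)/\delta$, so I expect the proof to be quite short and almost tautological; the only real content is unwinding definitions correctly.

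First I would check that $\chi$ is well defined as a set map, which is immediate since each $l \in {\mathcal{L}}(\Gamma)$ lies in exactly one $\delta$-class (Lemma~\ref{TwoEquivalence} gives that $\delta$ is an equivalence relation, hence its classes partition ${\mathcal{L}}(\Gamma)$). Next I would verify the homomorphism condition: for arbitrary $l_1, l_2 \in {\mathcal{L}}(\Gamma)$,
\[
\chi(l_1 \mathring{\cup} l_2) = (l_1 \mathring{\cup} l_2)\delta = (l_1\delta) * (l_2\delta) = \chi(l_1) * \chi(l_2),
\]
where the middle equality is exactly the definition of the operation $*$ on ${\mathcal{L}}(\Gamma)/\delta$. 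Since the preceding lemma already established that $*$ is well defined (and indeed this well-definedness is the only place where the congruence property of $\delta$ is needed), no further compatibility check is required here.

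Finally, for surjectivity I would note that an arbitrary element of ${\mathcal{L}}(\Gamma)/\delta$ has the form $l\delta$ for some $l \in {\mathcal{L}}(\Gamma)$ by the definition of the quotient set, and then $\chi(l) = l\delta$ exhibits the required preimage. Combining the three items yields that $\chi$ is a surjective homomorphism, i.e., an epimorphism, in the sense of the definition just introduced.

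The main obstacle, such as it is, is purely notational: one must make sure not to conflate the reducing operation $\mathring{\cup}$ on ${\mathcal{L}}(\Gamma)$ with the induced operation $*$ on the quotient, and one must cite the preceding lemma to justify that $*$ truly gives a quasi-semilattice structure (otherwise the codomain of $\chi$ is not yet an object of the category in which we want to work). Since all genuine work has already been done in defining $*$ and proving it well defined, no new obstacle appears at this step.
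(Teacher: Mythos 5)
Your proposal is correct and follows essentially the same route as the paper: the homomorphism property is exactly the defining identity $(l_1\delta)*(l_2\delta)=(l_1\mathring{\cup}l_2)\delta$ read backwards, and surjectivity is immediate from the form of elements of the quotient. The extra remarks on well-definedness of $\chi$ and the reliance on the preceding lemma for $*$ are sound but add nothing beyond the paper's two-line argument.
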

\begin{proof}
It is clear that $\chi$ is surjective . For any $l_1, l_2 \in {\mathcal{L}}(\Gamma)$, we have $\chi(l_1 ) * \chi(l_2)= l_1\delta * l_2\delta = (l_1 \mathring{\cup} l_2)\delta = \chi(l_1 \mathring{\cup} l_2)$. Thus $\chi$ is a homomorphism.
\end{proof}

At the end of this section, we remark that given a network $\Gamma$, ${\mathcal{L}}(\Gamma)$ consists of all the tensor chains and  all the $\delta$-classes form a partition of ${\mathcal{L}}(\Gamma)$. Each $\delta$-class is a semilattice.  ${\mathcal{L}}(\Gamma)$ is a joint union of semilattices.  Each semilattice contains all the model of the tensor chains generated by the same tensor set.

\section{Order relations}\label{sec:orderrelation}

The $\mathring{\cup}$ operation on ${\mathcal{L}}(\Gamma)$ can induce an order relation $\preceq$ on ${\mathcal{L}}(\Gamma)$ defined below. For any $l_1, l_2 \in {\mathcal{L}}(\Gamma)$, we define that
\[l_1 \preceq l_2\ \mbox{if}\ l_1 \mathring{\cup} l_2 = l_2. \]

\begin{prop}\label{order}
The relation $\preceq$ defined above is a partial order relation on $({\mathcal{L}}(\Gamma), \mathring{\cup})$.
\end{prop}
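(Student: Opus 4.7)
The plan is to verify the three axioms of a partial order: reflexivity, antisymmetry, and transitivity. Reflexivity is immediate from the fact that every element of $\mathcal{L}(\Gamma)$ is idempotent under $\mathring{\cup}$, a fact established in the proof that $\mathcal{L}(\Gamma)$ is a quasi-semilattice: indeed $l \mathring{\cup} l = l$ directly gives $l \preceq l$. Antisymmetry is a one-liner using commutativity of $\mathring{\cup}$: if $l_1 \preceq l_2$ and $l_2 \preceq l_1$, then $l_1 = l_2 \mathring{\cup} l_1 = l_1 \mathring{\cup} l_2 = l_2$.

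The main obstacle is transitivity, since associativity fails in $\mathcal{L}(\Gamma)$. The naive attempt ``$l_1 \mathring{\cup} l_3 = l_1 \mathring{\cup} (l_2 \mathring{\cup} l_3) = (l_1 \mathring{\cup} l_2) \mathring{\cup} l_3 = l_2 \mathring{\cup} l_3 = l_3$'' is not permitted. Instead, I would unpack the meaning of $l_1 \preceq l_2$ at the level of the data $\oplus(P|S)$ using Definition~\ref{Ladd}. Concretely, for non-empty $l_1 = \oplus(P_1|S_1)$ and $l_2 = \oplus(P_2|S_2)$, the identity $l_1 \mathring{\cup} l_2 = l_2$ forces $P_1 \cap P_2 \neq \emptyset$ (so that the reducing is non-empty), and then the formula $l_1 \mathring{\cup} l_2 = \oplus(P_1 \cup P_2 \mid S_1 \cup S_2)$ combined with $l_2 = \oplus(P_2 \mid S_2)$ yields the set inclusions $P_1 \subseteq P_2$ and $S_1 \subseteq S_2$. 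Conversely, these inclusions (together with $P_1 \neq \emptyset$) imply $l_1 \preceq l_2$. Thus $\preceq$ is nothing other than the coordinate-wise inclusion of the tensor set and the $2$-chain set.

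Once this reformulation is in hand, transitivity becomes a chain of set inclusions: from $l_1 \preceq l_2$ and $l_2 \preceq l_3$ (all non-empty) one gets $P_1 \subseteq P_2 \subseteq P_3$ and $S_1 \subseteq S_2 \subseteq S_3$, whence $P_1 \cup P_3 = P_3$, $S_1 \cup S_3 = S_3$, and $P_1 \cap P_3 = P_1 \neq \emptyset$, giving $l_1 \mathring{\cup} l_3 = \oplus(P_3|S_3) = l_3$. The remaining bookkeeping is the empty case: I would check that $\emptyset \preceq l$ forces $l = \emptyset$ (since $\emptyset \mathring{\cup} l = \emptyset$), while $l \preceq \emptyset$ holds trivially, so that reflexivity, antisymmetry, and transitivity persist when any of $l_1, l_2, l_3$ equals $\emptyset$. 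This structural reformulation of $\preceq$ is the real content of the proof and will also be useful later when discussing maximum and minimum elements of $\delta$-classes.
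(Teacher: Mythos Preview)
Your proposal is correct and follows essentially the same route as the paper: reflexivity via idempotence, antisymmetry via commutativity, and transitivity by unpacking $l_1\preceq l_2$ into the set inclusions $P_1\subseteq P_2$, $S_1\subseteq S_2$ and then chaining these. Your treatment is in fact slightly more careful than the paper's, since you explicitly flag why the non-associativity of $\mathring{\cup}$ blocks the naive argument and you separately dispose of the cases involving $\emptyset$, which the paper leaves implicit.
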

\begin{proof}
$\rm (i)$ Clearly, $\preceq$ is reflective since for any $l \in {\mathcal{L}}(\Gamma)$, we have $l \mathring{\cup} l = l$.

$\rm(ii)$ To show that $\preceq$ is anti-symmetric, assume that $l_1, l_2 \in {\mathcal{L}}(\Gamma)$ are such that $l_1 \preceq l_2$ and $l_1 \neq l_2$. Then $l_1 \mathring{\cup} l_2 = l_2$. If $l_2 \preceq l_1$, we have $l_2 \mathring{\cup} l_1 = l_1$. It follows from $\mathring{\cup}$ being commutative that $l_1 = l_2 \mathring{\cup} l_1 = l_1 \mathring{\cup} l_2 = l_2$ which is a contradiction with $l_1 \neq l_2$. So $l_2 \npreceq l_1$.

$\rm (iii)$ Now we show that $\preceq$ is transitive. Suppose that $l_1, l_2, l_3 \in {\mathcal{L}}(\Gamma)$  are such that $l_1 \preceq l_2$ and $l_2 \preceq l_3$. Let $l_1 = \oplus(P_1|S_1)$, $l_2 = \oplus(P_2|S_2)$ and $l_3 = \oplus(P_3|S_3)$. By $l_1 \preceq l_2$ we get that $P_1 \subseteq P_2$ and $S_1 \subseteq S_2$. Also by $l_2 \preceq l_3$ we get that $P_2 \subseteq P_3$ and $S_2 \subseteq S_3$. Then we have $P_1 \subseteq P_3$ and $S_1 \subseteq S_3$ and further we obtain that $l_1 \mathring{\cup} l_3 = \mathring{\cup}(S_1 \cup S_3) = l_3$, that is, $l_1 \preceq l_3$.
\end{proof}

Next, we discuss the local maximum and the local minimum elements in a $\delta$-class by using the partial order relation $\preceq$ on ${\mathcal{L}}(\Gamma)$ defined above.

\begin{defn}
A non-empty element $l \in {\mathcal{L}}(\Gamma)$ is called a {\it local maximum element} of $l\delta$ if for any $l' \in l\delta$ we have $l' \preceq l$. A non-empty element $l \in {\mathcal{L}}(\Gamma)$ is called a {\it local minimum element} of $l\delta$ if for any $l' \in l\delta$ with $l' \preceq l$ we get $l' = l$.
\end{defn}

\begin{prop}
Each $\delta$-class of ${\mathcal{L}}(\Gamma)$ contains a unique local maximum element.
\end{prop}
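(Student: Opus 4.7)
The plan is to construct an explicit candidate for the local maximum of a given $\delta$-class and then verify that it dominates every other element in the class under $\preceq$. Fix a non-empty $l = \oplus(P|S) \in {\mathcal{L}}(\Gamma)$. By definition of $\delta$, the class $l\delta$ consists exactly of those tensor chains whose underlying tensor set equals $P$. I would define
\[S_P = \{\oplus(t_i, t_j) : t_i, t_j \in P,\ t_i \neq t_j,\ \oplus(t_i, t_j) \neq \emptyset\},\]
the set of \emph{all} non-empty 2-chains among the tensors of $P$, and propose $l^* = \mathring{\cup}S_P = \oplus(P|S_P)$ as the local maximum.

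First I would verify that $l^* \in l\delta$. Since $l$ is a tensor chain with tensor set $P$ and 2-chain set $S$, clearly $S \subseteq S_P$, so every element of $P$ already appears in some 2-chain of $S_P$; hence the tensor set of $l^*$ is exactly $P$. Moreover, since $l$ is connected (Definition~\ref{connected}) via the 2-chains in $S$, and $S \subseteq S_P$, the reduction $\mathring{\cup}S_P$ remains connected, so $l^*$ is indeed a tensor chain belonging to $l\delta$. Next, I would show that $l^*$ is a local maximum: for any $l' = \oplus(P|S') \in l\delta$, every 2-chain in $S'$ is by construction a non-empty 2-chain between tensors of $P$, so $S' \subseteq S_P$. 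Since $P \cap P = P \neq \emptyset$, Definition~\ref{Ladd} gives
\[l' \mathring{\cup} l^* = \mathring{\cup}(S' \cup S_P) = \mathring{\cup} S_P = l^*,\]
which is precisely the statement $l' \preceq l^*$.

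Uniqueness is then immediate from the antisymmetry of $\preceq$ established in Proposition~\ref{order}: any other local maximum $l^{**}$ of $l\delta$ satisfies both $l^{**} \preceq l^*$ (because $l^*$ is a maximum and $l^{**} \in l\delta$) and $l^* \preceq l^{**}$ (because $l^{**}$ is a maximum and $l^* \in l\delta$), forcing $l^* = l^{**}$. There is no real obstacle in this argument; the only point that deserves care is confirming that $l^*$ genuinely lies in $l\delta$, i.e., that enlarging the 2-chain set from $S$ up to the full collection $S_P$ preserves both connectedness and the underlying tensor set. Both facts follow at once from $S \subseteq S_P$ together with the assumed connectedness of the original representative $l$, so the proof reduces essentially to packaging this observation and invoking antisymmetry.
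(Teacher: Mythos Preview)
Your proposal is correct and follows essentially the same approach as the paper: both construct the candidate $\mathring{\cup}S_P$ from the full set of 2-chains on $P$, observe that any $l'=\oplus(P|S')$ in the class has $S'\subseteq S_P$ so that $l'\mathring{\cup}\,\mathring{\cup}S_P=\mathring{\cup}S_P$, and then argue uniqueness. The only cosmetic differences are that you invoke antisymmetry of $\preceq$ for uniqueness (the paper instead argues directly that $S_{l''}=S_P$), and you spell out more carefully why $\mathring{\cup}S_P$ actually lies in $l\delta$, a point the paper asserts without justification.
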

\begin{proof}
If $\delta$-class contains only $\emptyset$ then it is clear that $\emptyset$ is the unique local maximum element of $\emptyset\delta$. Let $l \in {\mathcal{L}}(\Gamma)$ be a non-empty chain with $l = \oplus(P|S)$. Then every chain in $l\delta$ is generated by $P$. Let $S_P$ denote the set of all 2-chains generated by $P$. Then for any $l' \in l\delta$ with $l' = \oplus(P|S_{l'})$, we have $S_{l'} \subseteq S_P$ and also $l' \mathring{\cup} (\mathring{\cup}S_P) = \mathring{\cup}(S_{l'} \cup S_P) = \mathring{\cup}S_P \in l\delta$, which follows that $l' \preceq \mathring{\cup}S_P $ and so $\mathring{\cup}S_P$ is the local maximum element.

To show the unique, suppose that  $l'' = \oplus(P|S_{l''}) \in l\delta$ is another local maximum element. Then  $\mathring{\cup}S_P \preceq l''$, that is, $l'' \mathring{\cup} (\mathring{\cup}S_P) = l''$ which means that $S_P \cup S_{l''} = S_{l''}$. Together with $S_{l''} \subseteq S_P$ we get $S_{l''} = S_P$, and so $l'' = \mathring{\cup}S_P$.
\end{proof}

Before we show that each $\delta$-class of ${\mathcal{L}}(\Gamma)$ contains local minimum elements, we recall spanning trees. The {\it spanning tree } in a connected undirected network must meet the following two conditions:(i)it contains all vertices in a connected grapg; (ii)there is only one path between any two vertices.

\begin{prop}
Each $\delta$-class of ${\mathcal{L}}(\Gamma)$ contains local minimum elements.
\end{prop}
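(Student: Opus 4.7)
The plan is to reduce the statement to the classical existence of a spanning tree in a finite connected graph by building an auxiliary graph from each $\delta$-class. Fix a non-empty $\delta$-class $l\delta$ with $l = \oplus(P|S)$, and let $S_P$ be the set of all 2-chains generated by tensors in $P$, which the previous proposition identified as the 2-chain set of the local maximum of $l\delta$. I would introduce the auxiliary graph $G_P$ whose vertex set is $P$ and whose edge set is $S_P$ (the 2-chain $l_{ij} \in S_P$ being interpreted as the edge joining $t_i$ and $t_j$).

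The first step is to record the dictionary between $l\delta$ and $G_P$. By the definition of $\delta$, every element of $l\delta$ has the form $l' = \oplus(P|S')$ with $S' \subseteq S_P$ and $(P,S')$ connected (Definition~\ref{connected}); conversely, every such pair yields an element of $l\delta$. Under this dictionary, $l_1 \preceq l_2$ in $l\delta$ (with $l_i = \oplus(P|S_i)$) is equivalent to $S_1 \subseteq S_2$, because $l_1 \mathring{\cup} l_2 = \oplus(P|S_1 \cup S_2)$. Hence local minimum elements of $l\delta$ correspond exactly to edge-minimal connected spanning subgraphs of $G_P$, i.e.\ spanning trees of $G_P$.

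The second step is to observe that $G_P$ is connected: the chain $l = \oplus(P|S)$ itself is connected and spans $P$, so its edge set $S \subseteq S_P$ already witnesses a connected spanning subgraph of $G_P$. Because the network $\Gamma$ is finite, $G_P$ is a finite connected graph and therefore admits at least one spanning tree $(P, S_0)$ with $S_0 \subseteq S_P$. Define $l_0 = \oplus(P|S_0) \in l\delta$. To verify that $l_0$ is a local minimum, take any $l'' = \oplus(P|S'') \in l\delta$ with $l'' \preceq l_0$; then $S'' \subseteq S_0$ and $(P,S'')$ is connected, contradicting the minimality of a spanning tree unless $S'' = S_0$, i.e.\ $l'' = l_0$.

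The only delicate point, and where I would spend the most care, is in step one: confirming that the dictionary $l' \leftrightarrow (P,S')$ preserves the notions of connectedness and of $\preceq$ exactly. Once that dictionary is in place, the existence of local minima of $l\delta$ becomes simply the statement that every finite connected graph has a spanning tree, which is standard. It is also worth remarking in passing (since the paper flags the connection) that these local minima are precisely the subnetworks of $\Gamma$ corresponding to spanning trees of $G_P$, explaining the analogy the authors draw with graph-theoretic spanning trees.
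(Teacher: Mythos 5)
Your proposal is correct and follows essentially the same route as the paper: identify the elements of $l\delta$ with connected spanning subgraphs of the graph on vertex set $P$ with edge set $S_P$, invoke the existence of a spanning tree in a finite connected graph, and verify minimality from the fact that a connected spanning subgraph contained in a spanning tree must equal it (the paper phrases this last step via the edge count $k-1$, which is the same fact). Your explicit setup of the dictionary between $\preceq$ and edge-set inclusion is a welcome clarification of a point the paper leaves implicit, but it is not a different argument.
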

\begin{proof}
If $\delta$-class contains only $\emptyset$ then it is clear that $\emptyset$ is the unique local minimum element of $\emptyset\delta$. Let $l \in {\mathcal{L}}(\Gamma)$ be a non-empty chain with $l = \oplus(P|S)$, where $P = \{t_1, t_2, \ldots, t_k\}.$ Then each chain in $l\delta$ is generated by $P$. Notice that chains are connected and connected graphs have spanning trees. So $l\delta$ has spanning trees generated by $P$. Assume that $l_T$ is a spanning tree generated by $P$. If $l' = \oplus(P|S_{l'}) \in l\delta$ is such that $l' \preceq l_T$. Then $l' \mathring{\cup} l_T = l_T$, which follows that $S_{l'} \subseteq S_{l_T}$. Since $l_T$ is a spanning tree it contains only $k-1$ edges, that is $|S_{l-T}| = k-1$. So $|S_{l'}|\leq k-1$. But $l'$ is a connected chain containing all the tensors in $P$ so $|S_{l'}|\geq k-1$. Thus we obtain that $|S_{l'}|= k-1$. Hence a spanning tree $l_T$ is a local minimum element.
\end{proof}

Let $l \in {\mathcal{L}}(\Gamma)$ be a non-empty chain with $l = \oplus(P|S)$, where $P = \{t_1, t_2, \ldots, t_k\}.$ Then each chain in $l\delta$ is generated by $P$. Let $S_P$ denote the set of all 2-chains generated by $P$. Then we can draw a undirected graph  whose sets of vertices and edges are $P$ and $S_P$, respectively. Its Laplace matrix is denoted by $A$. $A$ is a $k \times k$ matrix whose element in the $i$-th row and $j$-th column is that
$$a_{ij} =
\begin{cases}
\ -1 & \mbox{if } \ i \neq j \ \mbox{and}\ l_{ij}\neq \emptyset\\
\ \mbox{deg}\ (t_i) & \mbox{if}\ i = j,
\end{cases}
$$
where deg$(t_i)$ is the number of 2-chains generated by $t_i$, that is the degree of $t_i$. Furthermore, from the matrix-tree theorem \cite{Boccaletti4}, it can be obtained that the number of local minimum elements in the $l\delta$ class is equal to det$A_0$, where $A_0$ is the submatrix removing the $i$-th row and $i$-th column of Laplace matrix $A$, where $i$ is an arbitrary element of the set $\{1, 2, \ldots, k\}$.

\section{Path algebras}\label{sec:path}

In this section three path algebras: graph inverse semigroups, Leavitt path algebra and Cuntz-Krieger graph $C^*$-algebra are constructed in terms of  tensors with respect to chain-addition. Since the three path algebras are based on first-order directed networks, only first-order directed networks, referred to as directed networks, are considered in this section.

Ash and Hall \cite{Ash8} first introduced graph inverse semigroups by using the vertices and edges of directed networks. Graph inverse semigroups are a generalization of polycyclic monoids introduced by Nivart and Perrot \cite{Nivat15}. Let $\Gamma$  be a directed network. $\Gamma^{0}$  denotes the set of vertices of $\Gamma$, $\Gamma^1$ denotes the set of edges of $\Gamma$. For any $e \in \Gamma^1$, $s(e)$ is the {\it source} of $e$ and $r(e)$ is the {\it range} of $e$. For any $v \in \Gamma^0$, we regard $v$ as a path of length $0$ (or a trivial path), $s(v) = r(v) = v$, $s^{-1}(v)= \{e \in \Gamma^1: s(e) = v\}$ and the out-degree of a vertex $v$ is $|s^{-1}(v)|$, the number of directed edges with source $v$.   For any $e \in \Gamma^1$, $e^*$ is the {\it inverse} of $e$ such that $s(e^*) = r(e)$ and $r(e^*) = s(e)$. Let $(\Gamma^1)^*$ be the set $\{e^*| e \in \Gamma^1\}$. Assume that $\Gamma^1 \cap (\Gamma^1)^* = \emptyset$. The {\it graph inverse semigroup} $I(\Gamma)$ of $\Gamma$ is the semigroup with a zero element $o$ generated by $\Gamma^0 \cup \Gamma^1 \cup (\Gamma^1)^*$, satisfying the following relations :

\begin{enumerate}
\item[(I1)] for all $e \in \Gamma^0 \cup \Gamma^1 \cup (\Gamma^1)^*$, $s(e)e = er(e) = e$;
\item[(I2)] for any $u, v \in \Gamma^0$ with $u \neq v$, $uv = o$;
\item[(I3)] for any $e, f \in \Gamma^1$ with $e \neq f$, $e^*f = o$;
\item[(I4)] for $e \in \Gamma^1$, $e^*e = r(e)$.
\end{enumerate}

Next, we discuss how to generate graph inverse semigroups from tensors.

Let $e$ be an arbitrary element of $\Gamma^1$.  It is natural to write $e$,$s(e)$ and $r(e)$ as a tensor, denoted it by $e^{r(e)}_{s(e)}$. Let $T= \{e^{r(e)}_{s(e)}| e \in \Gamma^1\}$, where $e^{r(e)}_{s(e)}$ is the tensor with contra-variant index  $r(e)$ and  covariant index $s(e)$. Clearly it corresponds to  the edge $e$. Without ambiguity, it is abbreviated as $e$. According to the definition of graph inverse semigroups, the generators have not only the edges of directed graph, but also the inverse of edge and empty directed path at vertices. Therefore, let $T^0 = \{v^v_v|v \in \Gamma^0\}$, where $v^v_v$ denotes the empty tensor at $v$. Without ambiguity, it is denoted by $v$. Let $T^* = \{e^{s(e)}_{r(e)}| e \in \Gamma\}$, where $e^{s(e)}_{r(e)}$ denotes the tensor with contra-variant index  $s(e)$ and  covariant index $r(e)$, which corresponds to the inverse $e^*$ of $e$. Without ambiguity, it is denoted by $e^*$.

A {\it directed path} in $\Gamma$ is a finite sequence $p = e_1e_2\ldots e_k$ of edges $e_i \in \Gamma^1$ with $r(e_i) = s(e_{i+1})$ for $i =1, \ldots, k-1$. We define $s(p) = s(e_1)$ and $r(p) = r(e_k)$. For any two different tensors $e_1$ and $e_2$ with $r(e_1) = s(e_2)$, the 2-chain $\oplus(e_1, e_2)$ generated by $e_1$ and $e_2$ is a directed path from $s(e_1)$ to $r(e_2)$, called a {\it 2-line chain}, simply denoted by $\oplus\langle e_1, e_2\rangle$. Here $\langle e_1, e_2 \rangle$ reflects the direction of the chain $\oplus\langle e_1, e_2\rangle$, that is from $s(e_1)$ to $r(e_2)$. So $\oplus \langle e_1, e_2\rangle \neq \oplus\langle e_2, e_1 \rangle$.

For any directed path $p = e_1e_2 \cdots e_k$ with $e_i \in \Gamma^1$ and $r(e_i) = s(e_{i+1})$ for $i = 1, 2, \cdots, k-1$, it is written as
\[\oplus\langle e_1, e_2\rangle \mathring{\cup} \oplus\langle e_2, e_3\rangle \mathring{\cup} \cdots \mathring{\cup} \oplus\langle e_{k-1}, e_k\rangle\]
in view of the reducing of 2-chains. That is, the adjacent same tensors in $\oplus\langle e_1, e_2\rangle \oplus\langle e_2, e_3\rangle \cdots\cdots \oplus\langle e_{k-1}, e_k\rangle$ are combined into one, and the rest remain unchanged. Hence the path $p = e_1e_2 \cdots e_k$ is a tensor chain generated by tensors $e_1, e_2, \ldots, e_k$ according to the 2-chain set $S = \{\oplus\langle e_i, e_{i+1}\rangle| i = 1, 2, \ldots, k-1\}$, and the contra-variant index of the tensor in $p$ is the covariant index of its subsequent tensor.

\begin{defn}
Let $P = \{e_1, e_2, \cdots, e_k \}$, $S = \{\oplus\langle e_i, e_{i+1}\rangle| i = 1, 2, \ldots, k-1\}$. The tensor chain $\oplus(P|S)$ is called a {\it directed path} if $\varphi(e_i) = \psi(e_{i+1})$, $i = 1, 2, \cdots, k-1$. $\oplus(P|S)$ is simply denoted by $e_1e_2\cdots e_k$.
\end{defn}

Only 2-line chains are considered in the directed path. Therefore, in order to generate the directed path and further generate graph inverse semigroups, the conditions for the chain addition of two different tensors in Definition~\ref{ChainAdd}  is extended to $T^0 \cup T \cup T^*$, as follows:

\begin{defn} \label{ChainAdd3}
For any $t, t_1, t_2 \in T^0 \cup T \cup T^*$,
$$t_1 \overline{\oplus} t_2 =
\begin{cases}
\ t_1 & \mbox{if } \ t_1 \in T^0 \cup T \cup T^*,\ t_2 \in T^0\ \mbox{and}\ \varphi(t_1) = \psi(t_2)\\
\ t_2 & \mbox{if } \  t_1 \in T^0,\ t_2 \in T^0 \cup T \cup T^*\ \mbox{and}\ \varphi(t_1) = \psi(t_2)\\
\ t_1t_2 & \mbox{if } \ t_1, t_2 \in T\ \mbox{or}\ t_1, t_2 \in T^* \ \mbox{or}\ t_1 \in T, t_2 \in T^* \ \mbox{and}\ \varphi(t_1) = \psi(t_2)\\
\ \varphi(t_2) & \mbox{if } \ t_1 \in T^*, t_2 \in T, t_1^* = t_2\\
\ \emptyset & \mbox{otherwise},
\end{cases}
$$
$$\emptyset\overline{\oplus} t = t \overline{\oplus} \emptyset = \emptyset,$$
where $t_1t_2 = \oplus\langle t_1, t_2\rangle$.
\end{defn}

Obvilusly, $\overline{\oplus}$ can be generalised to the directed paths. For any $t_1, t_2, t_3 \in T^0 \cup T \cup T^*$, if $t_1 \overline{\oplus} t_2 = t_1t_2$ then we define $t_1t_2 \overline{\oplus} t_3 = t_1(t_2 \overline{\oplus} t_3)$. So for any directed paths $t_{i_1}t_{i_2} \cdots t_{i_{m_1}}$ and $t_{j_1}t_{j_2} \cdots t_{j_{m_2}}$, we have
\[t_{i_1}t_{i_2} \cdots t_{i_{m_1}} \overline{\oplus} t_{j_1}t_{j_2} \cdots t_{j_{m_2}} = t_{i_1}t_{i_2} \cdots t_{i_{m_1-1}} (\cdots((t_{i_m} \overline{\oplus}t_{j_1})\overline{\oplus}t_{j_2})\overline{\oplus} \cdots \overline{\oplus}t_{j_{m_2}}).\]

\begin{lem}
The semigroup $T(\Gamma)$ generated by $T^0 \cup T \cup T^*$ and empty element $\emptyset$  together with $\overline{\oplus}$ is an inverse semigroup.
\end{lem}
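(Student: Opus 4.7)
The plan is to verify that $T(\Gamma)$ is a semigroup and then exhibit a unique inverse for every element, which is the standard way to check the inverse-semigroup axioms. Before starting, observe that Definition~\ref{ChainAdd3} realises exactly the defining relations of the graph inverse semigroup $I(\Gamma)$: the rules involving $T^0$ give (I1) and (I2), the cases $t_1, t_2 \in T$ (or $T^*$) together with the default clause give (I3), and the rule $t_1 \in T^*$, $t_2 \in T$ with $t_1^* = t_2$ producing $\varphi(t_2) = r(t_2) \in T^0$ gives (I4). So the proof reduces to checking associativity of $\overline{\oplus}$ and the existence and uniqueness of inverses.

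For associativity, I would argue by induction on word length, reducing to the case of triples $a \overline{\oplus} b \overline{\oplus} c$ with $a, b, c \in T^0 \cup T \cup T^* \cup \{\emptyset\}$. The case analysis splits according to which factors come from $T^0$, $T$, $T^*$. Most subcases are immediate from the concatenation rule $t_1t_2 = \oplus\langle t_1, t_2\rangle$, since concatenation of compatible paths is plainly associative; the delicate point is when $b = a^*$, so that $a \overline{\oplus} b$ collapses to a vertex in $T^0$, and one must check that this vertex meshes correctly with $c$ via the $T^0$-identity rules. A symmetric check handles the case $c = b^*$, and all other subcases collapse to $\emptyset$ on both sides.

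For inverses, I would show that every non-empty element of $T(\Gamma)$ reduces to a canonical form $pq^*$, where $p = e_1\cdots e_m$ and $q = f_1 \cdots f_n$ are (possibly trivial) directed paths with $r(p) = r(q)$. This reduction follows by repeatedly applying the cancellation rule $e^* \overline{\oplus} e = r(e)$ to eliminate $T^*T$-adjacencies inside the word and then absorbing the resulting $T^0$-elements via the identity rules. The inverse of $pq^*$ is then $qp^*$, since a direct computation using $q^* \overline{\oplus} q = r(q) = r(p) = p^* \overline{\oplus} p$ gives
\[
(pq^*) \overline{\oplus}(qp^*)\overline{\oplus}(pq^*) = p \overline{\oplus} r(p) \overline{\oplus} r(p) \overline{\oplus} q^* = pq^*,
\]
and symmetrically $(qp^*)\overline{\oplus}(pq^*)\overline{\oplus}(qp^*) = qp^*$; for $v \in T^0$ one takes $v^{-1} = v$, and $\emptyset^{-1} = \emptyset$.

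Uniqueness of the inverse follows once we show that idempotents commute. The non-zero idempotents of $T(\Gamma)$ are exactly the elements $pp^*$ for a directed path $p$ (with $v \in T^0$ corresponding to a trivial $p$). For two such idempotents $pp^*$ and $qq^*$, if one of $p$, $q$ is a prefix of the other then both products reduce, via iterated cancellation, to the idempotent associated with the longer path; if neither is a prefix of the other then the $T^*T$-reduction fails at the first disagreeing edge and both products collapse to $\emptyset$. Hence idempotents commute, the inverse is unique, and $T(\Gamma)$ is an inverse semigroup. The main obstacle is not conceptual but combinatorial: the associativity verification requires a patient case split, and identifying the full set of idempotents requires tracking how repeated applications of $\overline{\oplus}$ interact with the cancellation rule.
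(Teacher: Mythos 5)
Your proposal is correct in substance and ends up in the same place as the paper: every nonzero element is put into the normal form $pq^*$ with $p,q$ directed paths sharing a range, the idempotents are identified as the elements $pp^*$ and shown to commute, and $qp^*$ is exhibited as the inverse of $pq^*$. Where you genuinely diverge is in how associativity is handled. The paper first establishes the normal form and a closed product formula
$pq^* \overline{\oplus} xy^*$ (equal to $pzy^*$ if $x=qz$, to $p(yw)^*$ if $q=xw$, and to $\emptyset$ otherwise), and then verifies associativity by a five-case analysis entirely at the level of normal forms. You instead propose to prove associativity first, at the level of generators, by induction on word length, isolating the delicate collapse cases $b=a^*$ and $c=b^*$, and only afterwards derive the normal form. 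Your route is more foundational --- the paper's product formula for normal forms tacitly presupposes that the generator-level folds are unambiguous, which is exactly what your induction would establish --- but it is combinatorially heavier, and you should be aware that associativity on generator triples does not automatically propagate to the fold-defined product on longer words without a careful generalized-associativity argument (the cancellation $e^*\overline{\oplus}e = r(e)$ shortens words, so the induction must be set up on total length of the three factors). The paper's route buys a shorter, cleaner verification once the normal form is granted; yours buys rigor at the base level at the cost of a larger case split. Either way the remaining steps (idempotents, commutativity, inverses) match the paper's almost verbatim, and your observation that (I1)--(I4) are built into Definition~\ref{ChainAdd3} is exactly what the paper later exploits in Proposition~\ref{TwoInv}.
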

\begin{proof}
According to Definition~\ref{ChainAdd3}, it is easy to see that the forms of elements of $T(\Gamma)$ are $p, p^*, pq^*$ where $p$ and $q$ are directed path generated by tensors in $T^0 \cup T$ such that $r(p) = r(q)$.  Because $r(p)^{r(p)}_{r(p)} \in T^0$, $(r(p)^{r(p)}_{r(p)})^* = r(p)^{r(p)}_{r(p)}$ and so by Definition~\ref{ChainAdd3} $p = p \overline{\oplus} r(p)^{r(p)}_{r(p)}$ and $p^* = r(p)^{r(p)}_{r(p)}p^*$. It follows that there is only one element form in $T$, that is $pq^*$, where $p$ and $q$ are directed path generated by tensors in $T^0 \cup T^1$ such that $r(p) = r(q)$.

For any $pq^*, xy^* \in T(\Gamma)$, we have
$$pq^* \overline{\oplus} xy^* =
\begin{cases}
\ pzy^* & \mbox{if } \ x = qz\\
\ p(yw)^* & \mbox{if } \ q = xw\\
\ \emptyset & \mbox{otherwise},
\end{cases}
$$
where $z$ and $w$ are directed path generated by tensors in $T^0 \cup T$.

Next, we show that $\oplus$ is associative in $T(\Gamma)$. Suppose that $pq^*, xy^*, gh^* \in T(\Gamma)$.

Case I. If $x = qz_1, g = yz_2$, then we have $(pq^* \overline{\oplus} xy^*) \overline{\oplus} gh^*  = pz_1y^* \overline{\oplus} gh^* = pz_1z_2h^*$, and $pq^* \overline{\oplus} (xy^* \overline{\oplus} gh^*) = pq^* \overline{\oplus} xz_2h^* = pq^*qz_1z_2h^* = pz_1z_2h^* $, which follows that $(pq^* \overline{\oplus} xy^*) \overline{\oplus} gh^*  = pq^* \overline{\oplus} (xy^* \overline{\oplus} gh^*)$.

Case II. If $x = qz_1, y = gw_2$, then $(pq^* \overline{\oplus} xy^*) \overline{\oplus} gh^*  = pq^*qz_1y^* \overline{\oplus} gh^* = pz_1(gw_2)^*gh^* = pz_1(hw_2)^*$ and  $pq^* \overline{\oplus} (xy^* \overline{\oplus} gh^*) = pq^* \overline{\oplus} x(gw_2)^*gh^* = pq^* \overline x(hw_2)^* = pq^*qz_1(hw_2)^* = pz_1(hw_2)^*$, which follows that $(pq^* \overline{\oplus} xy^*) \overline{\oplus} gh^*  = pq^* \overline{\oplus} (xy^* \overline{\oplus} gh^*)$.

Case III. If $q = xw_1, g = yz_2$ then $(pq^* \overline{\oplus} xy^*) \overline{\oplus} gh^*  = p(xw_1)^*xy^* \overline{\oplus} gh^* = p(yw_1)^*yz_2h^* = pw_1^*z_2h^*$ and $pq^* \overline{\oplus} (xy^* \overline{\oplus} gh^*) = pq^* \overline{\oplus} xy^*yz_2h^* = p(xw_1)^*xz_2h^* = pw_1^*z_2h^*$, which follows that $(pq^* \overline{\oplus} xy^*) \overline{\oplus} gh^*  = pq^* \overline{\oplus} (xy^* \overline{\oplus} gh^*)$.

Case IV. If $q = xw_1, y = gw_2$ then $(pq^* \overline{\oplus} xy^*) \overline{\oplus} gh^*  = p(gw_2w_1)^*gh^* = pw^*_1w^*_2h^* = p(hw_1w_2)^*$ and $pq^* \overline{\oplus} (xy^* \overline{\oplus} gh^*) = pq^* \overline{\oplus}x(gw_2)^*gh^* = pq^* \overline{\oplus} x(hw_2)^* = p(xw_1)^*x(hw_2)^* = p(hw_2w_1)^*$, which follows that $(pq^* \overline{\oplus} xy^*) \overline{\oplus} gh^*  = pq^* \overline{\oplus} (xy^* \overline{\oplus} gh^*)$.

Case V. Otherwise, we have $(pq^* \overline{\oplus} xy^*) \overline{\oplus} gh^*  = pq^* \overline{\oplus} (xy^* \overline{\oplus} gh^*) = \emptyset$. Thus, $\oplus$ is associative in $T(\Gamma)$.

Now we show that the set of idempotents of $T(\Gamma)$ is a semilattice with respect to $\overline{\oplus}$. It is easy to see that for any $pq^* \in T(\Gamma)$, $pq^* \overline{\oplus} pq^* = pq^*$ if and only if $q^*p = r(p)^{r(p)}_{r(p)}$ , if and only if $p = q$. So the set of idempotents of $T(\Gamma)$ is
 $$E(T(\Gamma)) = \{pp^* |\ p \ \mbox{is\ a\ directed\ path\ generated\ by\ tensors\ in}\ T^0 \cup T\} \cup \{\emptyset\}.$$
For any $pp^*, qq^* \in E(T(\Gamma))$, we have
$$pp^* \overline{\oplus} qq^* =
\begin{cases}
\ pp^* & \mbox{if } \ p = qz\\
\ qq^* & \mbox{if } \ q = pw\\
\ \emptyset & \mbox{otherwise},
\end{cases}
$$
and also $pp^* \overline{\oplus} qq^* = qq^* \overline{\oplus} pp^*$. Hence $E(T(\Gamma))$ is a semilattice.

Finally, for any $pq^* \in T(\Gamma)$, $qp^*$ is an inverse of $pq^*$ since $pq^* \overline{\oplus} qp^* \overline{\oplus} pq^* = pq^*$ and $qp^* \overline{\oplus} pq^* \overline{\oplus} qp^* = qp^*$.

To sum up, $T(\Gamma)$ is an inverse semigroup.
 \end{proof}

 \begin{prop}\label{TwoInv}
  The map $\theta: I(\Gamma) \rightarrow T(\Gamma)$ , defined by the rule that \\
 {\rm (i)} for any $v \in \Gamma^0$, $\theta(v) = v^v_v$;\\
  {\rm (ii)} for any $e \in \Gamma^1$, $\theta(e) = e^{r(e)}_{s(e)}$;\\
 {\rm (iii)} for any $e^* \in (\Gamma^1)^*$, $\theta(e^*) = e^{s(e)}_{r(e)}$;\\
 {\rm (iv)} $\theta(o) = \emptyset$,\\
 is an isomorphism.
 \end{prop}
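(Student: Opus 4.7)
The plan is to establish the isomorphism in three stages: first verify that $\theta$ respects the four defining relations of $I(\Gamma)$, which gives a well-defined semigroup homomorphism; then use the normal-form description $pq^*$ of elements of $T(\Gamma)$ (established in the preceding lemma) together with the analogous normal form in $I(\Gamma)$ to deduce bijectivity. Note that $I(\Gamma)$ is presented by generators $\Gamma^0 \cup \Gamma^1 \cup (\Gamma^1)^*$ with zero $o$ modulo (I1)--(I4), so checking that $\theta$ respects these relations in $T(\Gamma)$ automatically yields a well-defined semigroup map, provided we send $o$ to $\emptyset$.

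For the relation check, I would go case by case using Definition~\ref{ChainAdd3}. For (I1) with $e \in \Gamma^1$: $\theta(s(e)) \overline{\oplus} \theta(e) = s(e)^{s(e)}_{s(e)} \overline{\oplus} e^{r(e)}_{s(e)}$ falls under case 2 since $s(e)^{s(e)}_{s(e)} \in T^0$ and $\varphi(s(e)^{s(e)}_{s(e)}) = s(e) = \psi(e^{r(e)}_{s(e)})$, yielding $e^{r(e)}_{s(e)} = \theta(e)$; the other half $\theta(e) \overline{\oplus} \theta(r(e)) = \theta(e)$ uses case 1, and the cases for $e^*$ and for trivial paths $v$ are entirely analogous. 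For (I2) with $u \neq v$: $u^u_u \overline{\oplus} v^v_v$ fails the matching condition $\varphi(t_1) = \psi(t_2)$ (since $u \neq v$), so it falls to the ``otherwise'' case giving $\emptyset$. For (I4) with $e \in \Gamma^1$: $e^{s(e)}_{r(e)} \overline{\oplus} e^{r(e)}_{s(e)}$ is exactly case 4 ($t_1 \in T^*$, $t_2 \in T$, $t_1^* = t_2$), producing $\varphi(t_2) = r(e) = \theta(r(e))$. The key subtlety lies in (I3): for $e \neq f$ in $\Gamma^1$, the pair $(e^*,f) \in T^* \times T$ is not listed in any of cases 1--3 (note case 3 only covers $T \times T$, $T^* \times T^*$, and $T \times T^*$, but not $T^* \times T$), and case 4 fails because $e^* \neq f^*$, so again we obtain $\emptyset$. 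This last point is the most delicate; it hinges on the asymmetric listing in Definition~\ref{ChainAdd3}, and deserves explicit comment since it is precisely the mechanism by which the ``orthogonality'' relation (I3) is enforced inside $T(\Gamma)$.

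Extending $\theta$ by $\theta(xy) = \theta(x) \overline{\oplus} \theta(y)$ then yields a semigroup homomorphism because $\overline{\oplus}$ is associative in $T(\Gamma)$ (established in the preceding lemma). Surjectivity is essentially immediate from the preceding lemma: every element of $T(\Gamma)$ is either $\emptyset = \theta(o)$ or of the form $pq^*$ where $p,q$ are directed tensor paths in $T(\Gamma)$ with $r(p) = r(q)$, and each such tensor path corresponds edge-for-edge to a directed path $\tilde p$ in $\Gamma$; then $\theta(\tilde p \tilde q^*) = pq^*$.

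For injectivity I would invoke the standard normal form for $I(\Gamma)$: every nonzero element is uniquely expressible as $\tilde p \tilde q^*$ with $r(\tilde p) = r(\tilde q)$, and distinct such pairs represent distinct elements of $I(\Gamma)$. Under $\theta$ these map to the corresponding $pq^*$ in $T(\Gamma)$, and since the tensor indexing recovers both the edge sequences and the vertices, distinct normal forms have distinct images. The main obstacle I anticipate is precisely this bookkeeping between the two normal forms: one must verify that $\theta$ neither identifies two genuinely different paths nor fails to respect the equality $r(\tilde p)=r(\tilde q)$ when translating to the tensor side. Once this correspondence is spelled out, bijectivity and hence the isomorphism follow.
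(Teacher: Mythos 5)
Your proposal follows essentially the same route as the paper: verify that $\theta$ respects (I1)--(I4) so that it descends to a homomorphism from the presentation of $I(\Gamma)$, then use the $pq^*$ normal form from the preceding lemma for surjectivity and injectivity. The only difference is that you spell out the injectivity argument via uniqueness of normal forms (and flag the $T^*\times T$ asymmetry behind (I3)), where the paper simply declares injectivity obvious; your version is the more careful one.
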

\begin{proof}
We first show that conditions (I1)-(I4) in the definition of graph inverse semigroups hold.

(I1) For any $e \in \Gamma^0 \cup \Gamma^1 \cup (\Gamma^1)^*$, we have $\theta(s(e)) \overline{\oplus}\theta(e) = s(e)^{s(e)}_{s(e)} \overline{\oplus} e^{r(e)}_{s(e)} = e^{r(e)}_{s(e)} = \theta(e)$ and $\theta(e) \overline{\oplus} \theta(r(e)) = e^{r(e)}_{s(e)} \overline{\oplus} r(e)^{r(e)}_{r(e)} = e^{r(e)}_{s(e)} = \theta(e)$.

(I2) For any $u, v \in \Gamma^0$ with $u \neq v$, $\theta(u) \overline{\oplus} \theta(v) = u^u_u \overline{\oplus} v^v_v = \emptyset = \theta(o)$.

(I3) If $e, f \in \Gamma^1$ are such that $e \neq f$, then $\theta(e^*) \overline{\oplus} \theta(f) = e^{s(e)}_{r(e)} \overline{\oplus} f^{r(f)}_{s(f)} = \emptyset = \theta(o)$.

(I4) If $e \in \Gamma^1$, then $\theta(e^*) \overline{\oplus} \theta(e) = e^{s(e)}_{r(e)} \overline{\oplus} e^{r(e)}_{s(e)} = r(e)^{r(e)}_{r(e)} = \theta(r(e))$.

Let $W = \Gamma^0 \cup \Gamma^1 \cup (\Gamma^1)^*$. Then $\theta$ is a homomorphism from $I(\Gamma) =F_W/\sim$ to $T(\Gamma)$, where $F_W$ is the free semigroup over $W$  and $\sim$ is the relation satisfying (I1)-(I4).

To show that $\theta$ is surjective, suppose that $p = e_1{^{r(e_1)}_{s(e_1)}}e_2{^{r(e_2)}_{r(e_1)}}\cdots e_k{^{r(e_k)}_{r(e_{k-1})}}$, $q = f_1{^{r(f_1)}_{s(f_1)}}f_2{^{r(f_2)}_{r(f_1)}}\cdots f_m{^{r(f_m)}_{r(f_{m-1})}}$ and $r(e_k) = r(f_m)$ then

$$pq^* = e_1{^{r(e_1)}_{s(e_1)}}e_2{^{r(e_2)}_{r(e_1)}} \cdots e_k{^{r(e_k)}_{r(e_{k-1})}}f_m{^{r(f_m)}_{r(f_{m-1})}}  \cdots f_2{^{r(f_2)}_{r(f_1)}}f_1{^{r(f_1)}_{s(f_1)}}.$$
Further  we have
$$\theta(e_1e_2\cdots e_kf_m^* \cdots f_2^*f_1^*) = \theta(e_1)\overline{\oplus}\theta(e_2) \overline{\oplus} \cdots \overline{\oplus} \theta(e_k) \overline{\oplus} \theta(f_m^*) \overline{\oplus} \cdots \overline{\oplus} \theta(f_2^*) \overline{\oplus} \theta(f_1^*) = pq^*.$$
Hence $\theta$ is surjective. Obviously it is injective and so $\theta$ is an isomorphism.
\end{proof}

Graph inverse semigroups are also an extension of graph $C^*$-algebra\cite{Kumjian9} and Leavitt path algebras\cite{Abrams10, Ara11}. Leavitt path algebras is derived from W.G. Leavitt's seminal paper\cite{Leavitt16}. Suppose that $F$ is a field and the out-degree of each vertex in a directed network $\Gamma$ is finite. The $F$-algebra generated by $\Gamma^0 \cup \Gamma^1 \cup (\Gamma^1)^*$ and a zero element $o$ is a {\it Leavitt path algebra} $L_F(\Gamma)$ if it satisfies relations (I1)-(I4) and the following Cuntz-Krieger relations (CK1):

(CK1) $v = \sum_{e \in s^{-1}(v)}ee^*$ for any $v \in \Gamma^0$ and the out-degree of $v$ is greater than $0$.

If $F$ is the complex field $\mathcal{{C}}$, Leavitt path algebra and Cuntz-Krieger graph $C^*$-algebra\cite{Kumjian9} are close related. Suppose that $\Gamma^0$ and $\Gamma^1$ are countable  and the out-degree of each vertex of $\Gamma$ is finite. The universal $C^*$-algebra is generated by $\Gamma^0 \cup \Gamma^1 \cup (\Gamma^1)^*$ and a zero element $o$ is a {\it Cuntz-Krieger graph } $C^*$ -{\it algebra} $C^*(\Gamma)$ if it satisfies relations (I1)-(I4), (CK1) and the following (CK2):

(CK2) for any $e \in \Gamma^1$, $ee^* \leq s(e)$.

 \noindent Here for any $u, v \in \Gamma^0$, $u \leq v$ if there exists a path $p$ such that $s(p) = v$ and $r(p) = u$; for any path $q = e_1e_2 \ldots e_k$ and $v \in \Gamma^0$, $q \leq v$ if $r(e_i) \leq v$ for all $i \in {1,2, \ldots, k}$.

Obviously, $T^0$, $T$ and $T^*$ are corresponding to $\Gamma^0$, $\Gamma^1$ and $(\Gamma^1)^*$ one to one , respectively. Due to the proof of Proposition~\ref{TwoInv} the chain addition on $T^0$, $T$ and $T^*$  satisfies (I1)-(I4), and so according to the definition of Leavitt path  algebra and Cuntz-Krieger graph $C^*$-algebra, except that the directed graph satisfies certain conditions (the out-degree of each vertex is finite, and the sets of vertices and edges are countable), only (CK1) and (CK2) need to be satisfied. Therefore, based on Definition~\ref{ChainAdd3}, if the following (TCK1) and (TCK2) are satisfied,  Leavitt path algebra and Cuntz Krieger graph $C^*$-algebra  can be generated by $T^0$, $T$ and $T^*$ and $\emptyset$ with respect to  the chain addition $\overline{\oplus}$.

(TCK1) $v^v_v = \sum_{\psi(e) = v}ee^*$ for any $v^v_v \in T^0$ with  the out-degree of $v$ is greater than $0$;

(TCK2) for any $e \in T$, $ee^* \leq \psi(e)$.

 Graph inverse semigroups,  Leavitt path algebra and Cuntz-Krieger graph $C^*$-algebra are algebraic systems of directed paths generated by tensors. Network quasi-semilattice  ${\mathcal{L}}(\Gamma)$ contains all the subnetworks  of $\Gamma$. Therefore, the study of the algebraic structure of quasi semilattice ${\mathcal{L}}(\Gamma)$  will help to study all the substructures (including paths) of the network.


\end{document}